\newcommand{\cOl}{{\mathcal O}_{\ell}}
\begin{document}
\title{On the affine Springer fibers inside the invariant center of the small quantum group\vspace{-2ex}}
\author[1]{Nicolas Hemelsoet}
\author[2]{Oscar Kivinen}
\author[1]{Anna Lachowska}
\affil[1]{Department of Mathematics, EPFL  \thanks{nicolas.hemelsoet@epfl.ch, anna.lachowska@epfl.ch}}
\affil[2]{Department of Mathematics and Systems Analysis, Aalto University\thanks{oscar.kivinen@aalto.fi}\vspace{-2ex}}
\date{\today\vspace{-3ex}}
\maketitle
\begin{abstract} 
Let $\fu_\zeta^\vee$ denote the small quantum group associated with a simple Lie algebra $\fg^\vee$ and a root of unity $\zeta$. In \cite{BBASV} a geometric realization of $Z(\fu_\zeta^\vee)^{G^\vee}$, the $G^\vee$-invariant part of the center of $\fu_\zeta^\vee$, was proposed. We compute the dimension of the geometric subalgebra of the center and in the case where $G=SL_n$, we study a bigraded refinement of the result.
\end{abstract} 
\setcounter{tocdepth}{2}
\begin{spacing}{0.3}
\tableofcontents 
\end{spacing}
\section{Introduction} 

Let $G$ be a complex simple simply connected algebraic group, and $\fg$ its Lie algebra. We will fix Cartan and Borel subalgebras $\ft \subset \fb \subset \fg$. Denote also by $\ft^\vee\subset\fb^\vee\subset\fg^\vee$ the same data for the Langlands dual Lie algebra. Denote $T \subset B$ (respectively $G^{\vee}$) the connected Lie groups corresponding to $\ft \subset \fb$ (respectively $\fg^{\vee}$). Let $\ell$ be an odd number that is greater than the Coxeter number $h$ of $\fg$ and coprime to the determinant of the Cartan matrix and to $h, h+1$. 

We denote by $\fu_\zeta^\vee=\fu_\zeta(\fg^\vee)$ the small quantum group associated to the Lie algebra \cite{Lus}.
Let $\Lambda$ denote the cocharacter lattice of $G$, 
and $W$ the Weyl group of $\fg$ (and $\fg^\vee$). Then $\fu^\vee_\zeta$ decomposes into a direct sum of blocks enumerated by the orbits of the extended affine Weyl group $\tW = W \ltimes \Lambda$ acting via the $\ell$-dilated dot action on $\Lambda$. See for example the introduction to \cite{LQ2}. We denote by $\fu_\zeta^{\vee,\lambda}$ the block corresponding to the $\tW$-orbit of $\lambda \in \Lambda$. In particular, $\fu_\zeta^{\vee,0}$ denotes the principal (regular) block of the small quantum group. 

In this paper we derive a dimension formula for a geometrically defined subalgebra $Z(u^\vee_\zeta)^{G^\vee}_{geom}\subseteq Z(u^\vee_\zeta)^{G^\vee}$, the $G^\vee$-invariant part of the center of the small quantum group. The precise definition is in terms of affine Springer theory and will be given in due course. One of our main results is a dimension formula for this subalgebra:
\begin{theorem} 
\label{thm:mainthm}
Suppose that $\ell$ is as above. Then 
$${\rm dim}\, Z(u^\vee_\zeta)^{G^\vee}_{geom} ={\rm Cat}_W((h+1)\ell-h,h) , $$
where ${\rm Cat}_W$ is the generalized rational Coxeter--Catalan number of $W$, and $h$ the Coxeter number associated with the root system of $\fg$. 
\end{theorem} 
After writing the present paper, we received the complete text of \cite{BBASV}, where the same dimension formula is obtained in \cite[Theorem C]{BBASV}. Whereas the proof in \cite{BBASV} uses a comparison to certain elliptic affine Springer fibers and the results of Sommers, our argument is based on the block decomposition and Coxeter--Catalan combinatorics. However, it will be clear to the reader that our results rely heavily on the definitions and results in \cite{BBASV}.
A major point of divergence from that paper is the case $G=SL_n$, for which we study a refinement of Theorem \ref{thm:mainthm} involving a bigrading coming from the geometry of affine Springer fibers. This is done in Sections \ref{sec:dr} and \ref{sec:hitchin}.

We now recall the definition of $Z(u^\vee_\zeta)^{G^\vee}_{geom}$, following \cite{BBASV}.
Let $\Gr^{\gamma,\zeta} = \Gr^{\gamma} \cap \Gr^{\zeta}$, where $\Gr^{\gamma}$ is the affine Springer fiber with $\gamma = t^{\ell-1}s$ for a regular element $s \in {\ft}^{reg}$, and $\Gr^{\zeta}$ is the set of fixed points for the cyclic group action generated by $\zeta$. 
There is a left $\widetilde{W}$-action on the singular cohomology groups $H^*(\Gr^{\gamma,\zeta})$ induced by the lattice action on the affine Springer fiber, as well as a monodromy action coming from variation of $s$ in a family. This action is called the equivariant centralizer-monodromy action in \cite{BAL} and the dot action in \cite{CM}. More precise definitions will be given in the next section.
\begin{theorem}[\cite{BBASV}] \label{BBASV-thm} 
There is an algebra embedding 
$$H^*(\Gr^{\gamma,\zeta})^{\widetilde{W}\cdot} \subseteq  Z(\fu^\vee_\zeta)^{G^\vee} , $$
where the product on the left is the cup product and we take invariants for the dot action.
\end{theorem}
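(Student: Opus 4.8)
\emph{Sketch of the argument.} The plan is to establish the embedding one block at a time and then assemble. One writes $Z(\fu_\zeta^\vee)^{G^\vee}=\bigoplus_\lambda Z(\fu_\zeta^{\vee,\lambda})^{G^\vee}$, indexed by the $\widetilde W$-orbits on $\Lambda$, and matches this with a parallel decomposition of $H^*(\Gr^{\zeta,\gamma})$ into summands cut out by central idempotents, one per linkage class; it then suffices to produce for each $\lambda$ an algebra embedding of the corresponding piece of $H^*(\Gr^{\zeta,\gamma})^{\widetilde W}$ into $Z(\fu_\zeta^{\vee,\lambda})^{G^\vee}$, compatibly with the cup product.

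The main input is a geometric realization of the category of $\fu_\zeta^\vee$-modules of Arkhipov--Bezrukavnikov--Ginzburg / Kazhdan--Lusztig type, in the affine incarnation supplied by \cite{BBASV}: a derived equivalence between a dg enhancement of a block (in its $G^\vee$-equivariant form) and a category of equivariant (ind-)coherent sheaves on a formal neighborhood of the affine Springer fiber $\Gr^{\zeta,\gamma}$ in the relevant parameter scheme. Under such an equivalence the $G^\vee$-invariant center --- the endomorphism ring of the identity functor --- is computed by the degree-zero part of the Hochschild cohomology, equivalently of the $\mathrm{Ext}$-algebra of the diagonal, and the classes in $H^*(\Gr^{\zeta,\gamma})$ act on the whole category by cup product, hence map to the center. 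The point of taking $\widetilde W$-invariants is that this is exactly the subalgebra of cohomology classes compatible with the $G^\vee$-equivariant structure (invariant under the Springer/monodromy action), so one obtains a well-defined map $H^*(\Gr^{\zeta,\gamma})^{\widetilde W}\to Z(\fu_\zeta^\vee)^{G^\vee}$; that it is a ring map for the cup product follows from monoidality of the equivalence, cup product of cohomology classes corresponding to Yoneda composition of natural endotransformations of the identity, which is multiplication in the center.

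The crux is injectivity. I would prove it by exhibiting a $\fu_\zeta^\vee$-module --- or a small family such as the baby (co-)Verma modules, or a big projective of Steinberg type --- on which the action of $H^*(\Gr^{\zeta,\gamma})^{\widetilde W}$ is faithful; geometrically this says the structure sheaf of $\Gr^{\zeta,\gamma}$ (or a skyscraper at a torus fixed point) is generating enough that no nonzero cohomology class annihilates it. This in turn rests on (i) purity/formality of $H^*(\Gr^{\zeta,\gamma})$ --- available because the homogeneous affine Springer fiber attached to a topologically nilpotent regular semisimple element of the relevant slope carries a contracting $\mathbb{C}^*$-action, so the pertinent spectral sequences degenerate and the whole cohomology ring is visible as operators --- and (ii) a non-degeneracy statement, that distinct $\widetilde W$-invariant classes induce distinct operators. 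Step (ii) should be bootstrapped from the analogous, easier faithfulness in the generic (graded, non-root-of-unity) situation by a flatness and specialization argument, checking that no collapse of the cohomology ring occurs upon specializing the quantum parameter to $\zeta$.

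I expect (ii) --- controlling this non-degeneracy after passage to the root of unity, i.e.\ ruling out accidental relations among the images of invariant cohomology classes --- to be the principal obstacle. The remaining ingredients are either formal (block decomposition, monoidality of the equivalence, the identification of endomorphisms of the identity functor with the invariant center) or by now standard facts about equivariant cohomology of homogeneous affine Springer fibers.
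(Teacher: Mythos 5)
There is nothing in this paper to compare your argument against: Theorem \ref{BBASV-thm} is not proved here at all. It is imported verbatim from \cite{BBASV} (see the attribution ``[BBASV]'' and the footnote in the Introduction), and the authors' own work begins downstream of it, using the block decomposition of $H^*(\Gr^{\zeta,\gamma})$ from Eq. \eqref{eq:spfixedpoints} to compute dimensions. So what you have written is an attempted reconstruction of the external result, and it has to be judged as such.

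As a proof it has genuine gaps, and you have in fact flagged the main one yourself. The injectivity of the map $H^*(\Gr^{\zeta,\gamma})^{\widetilde{W}}\to Z(\fu_\zeta^\vee)^{G^\vee}$ --- your step (ii), ruling out collapse of the invariant cohomology classes after specializing to the root of unity --- is deferred to a hoped-for faithfulness-plus-flatness argument that is never carried out; but this, together with the very existence of the map, is the substantive content of the theorem, not a formality. The same applies to two other load-bearing assertions: the existence and precise form of the derived equivalence between (a $G^\vee$-equivariant enhancement of) a block and equivariant coherent sheaves on a formal neighborhood of $\Gr^{\zeta,\gamma}$, and the claim that under it the cup product goes to the center product ``by monoidality.'' Identifying $Z(\fu_\zeta^{\vee,\lambda})^{G^\vee}$ with endomorphisms of the identity (equivalently degree-zero Hochschild cohomology) is fine, but the passage from there to the \emph{ordinary} cohomology ring of $\Gr^{\zeta,\gamma}$ with its $\widetilde{W}$-action (lattice plus monodromy in $s$) requires a concrete construction and a comparison of multiplications; asserting it does not prove it. Finally, your appeal to a contracting $\C^*$-action and purity is too casual for $\gamma=st^{\ell-1}$: what is actually available here is the equivalued/GKM situation, and even granting purity it does not by itself yield the non-degeneracy you need. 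In short, the architecture of your sketch (blockwise construction, identification of the center with endomorphisms of the identity, geometric realization of blocks) is plausible and broadly consistent with how such results are established, but every point where the theorem could fail is left open, so the proposal does not constitute a proof.
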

Motivated by this, we let $Z(u^\vee_\zeta)^{G^\vee}_{geom}:=H^*(\Gr^{\gamma,\zeta})^{\widetilde{W}\cdot}$. Conjecturally, this is all of the $G^\vee$-invariant center:
\begin{conjecture}[\cite{BBASV}] \label{BBASV-conj} 
The embedding above is an isomorphism. 
\end{conjecture}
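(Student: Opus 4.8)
The plan is to upgrade the injection of Theorem~\ref{BBASV-thm} to an isomorphism by a dimension count. Since the map is injective it is enough to prove the reverse inequality $\dim_{\mathbb C} Z(\fu^\vee_\zeta)^{G^\vee}\le \dim_{\mathbb C} H^*(\Gr^{\zeta,\gamma})^{\widetilde W}$ --- that is, that the $G^\vee$-invariant center realizes nothing beyond the cohomology of $\Gr^{\zeta,\gamma}$ --- the right-hand side being then pinned down by the combinatorial formula of this paper. The natural first step is to pass to blocks: $Z(\fu^\vee_\zeta)=\prod_{[\lambda]} Z(\fu_\zeta^{\vee,\lambda})$ over the $\widetilde W$-orbits on $\Lambda$, and the combinatorial computation of $\dim H^*(\Gr^{\zeta,\gamma})^{\widetilde W}$ in this paper is already organized along precisely this decomposition; so it suffices to bound $\dim Z(\fu_\zeta^{\vee,\lambda})^{G^\vee}$ orbit by orbit and compare with the corresponding term.

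For the regular block the natural tool is a derived equivalence of Arkhipov--Bezrukavnikov--Ginzburg / Bezrukavnikov--Lachowska type, identifying $\fu_\zeta^{\vee,0}$-modules (as a category carrying a $G^\vee$-action) with coherent sheaves on a scheme of Springer-theoretic nature --- a fiber product built from the Springer resolution $\widetilde{\mathcal N}$. Under such an equivalence the center of the block is the degree-zero Hochschild cohomology of the coherent side, i.e., a ring of global functions on that scheme, with its natural $G^\vee$-action; its $G^\vee$-invariants should then, via the Springer-theoretic description of $H^*(\Gr^{\zeta,\gamma})$, be identified with the $\widetilde W$-invariant cohomology. For a singular block one would run the same argument with the parabolic version of the equivalence, $\widetilde{\mathcal N}$ replaced by the partial resolution $\widetilde{\mathcal N}_{\mathcal P}$ attached to the stabilizer of $\lambda$; the bookkeeping of which orbits contribute, and with what multiplicity, is exactly what the stratification of $\Gr^{\zeta,\gamma}$ used here is designed to encode.

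I expect the main obstacle to be controlling the \emph{entire} center rather than only its geometrically transparent part. A priori $Z(\fu_\zeta^{\vee,\lambda})$ could contain elements invisible to the $\mathrm{HH}^0$ of the abelian category unless one knows that the dg endomorphism algebra of a projective generator of the block is formal and concentrated in the expected cohomological degrees --- a genuine vanishing statement, not a formal manipulation. Compounding this, the singular-block analogue of the ABG equivalence, together with the precise $G^\vee$-equivariant structure needed to take invariants correctly, is not available off the shelf and would have to be developed in the required generality; this is where I expect the bulk of the work to lie. In type $A$ there is a fallback: compare against the explicit diagrammatic/Soergel presentation of the blocks and use the refined bigrading studied in the $SL_n$ part of this paper to pin the Hilbert series of each $Z(\fu_\zeta^{\vee,\lambda})^{G^\vee}$ exactly, which would force the equality of dimensions block by block without resolving the equivariance subtleties in full generality.
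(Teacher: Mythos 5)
The statement you are trying to prove is stated in the paper as a \emph{conjecture} (due to \cite{BBASV}); the paper gives no proof of it, and only records that it is confirmed for types $A_1,\dots,A_4$, $B_2$, $G_2$ by matching the dimension formula of Theorem \ref{thm:dimension} against explicit Hochschild cohomology computations on the coherent side \cite{HemelsoetVoorhaar}. So there is no proof in the paper to compare yours against, and your proposal does not close the gap either: it is a strategy outline whose decisive step is exactly the open problem. Your plan reduces the conjecture to the inequality $\dim Z(\fu_\zeta^{\vee,\lambda})^{G^\vee}\le \dim \DR_W^{W_\lambda}$ block by block, and proposes to get it from an ABG/BL-type equivalence. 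But the equivalence (including its singular-block form) is already available and already used in the paper: Theorem \ref{Springer_res} identifies $Z(\fu_\zeta^{\vee,\lambda})$ with $\bigoplus_{i+j+k=0}H^i(\widetilde N_P,\wedge^jT\widetilde N_P)^k$. The hard content is not setting up this identification but \emph{computing} the $G^\vee$-invariant part of these coherent cohomology groups for general $G$ and general $P$, and no technique for that is offered in your sketch; "its $G^\vee$-invariants should then be identified with the $\widetilde W$-invariant cohomology" is a restatement of the conjecture, not an argument. The formality/degree-concentration issue you flag is likewise a genuine obstruction you name but do not resolve.

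Your type-$A$ fallback also does not work as stated: pinning the bigraded Hilbert series of $Z(\fu_\zeta^{\vee,\lambda})^{G^\vee}$ via the bigradings studied in this paper presupposes the comparison of the coherent bigrading with the affine-Springer-fiber bigrading, which is itself conjectural here (Conjecture \ref{conj:springer}, and Theorem \ref{thm:bgrd} is conditional on \cite[Conjecture A]{CM}); the only cases where the coherent side has been matched exactly are the subregular blocks with $G/P_\lambda\simeq\mathbb P^k$ (Proposition \ref{subreg} and its corollary) and the low-rank computations cited above. In short, your proposed route is the natural one and is consistent with how the known partial evidence was obtained, but as a proof it is circular at the key step: the dimension (or Hilbert series) of the invariant center on the quantum-group/coherent side is precisely what remains unknown.
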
 

The logical structure of the proof of the dimension formula in Theorem \ref{thm:mainthm} is as follows. Both sides of Theorem \ref{BBASV-thm} naturally decompose into blocks. Geometrically, this follows from the fact that $\Gr^{\gamma,\zeta}$ can be written as a finite disjoint union of generalizations of affine Springer fibers, as explained in the next section. This block decomposition respects the embedding into $Z(\fu^\vee_\zeta)^{G^\vee}$. Crucially, reinterpreting $\Gr^{\gamma,\zeta}$ using varieties which do not depend on $\ell$ lets us describe each block without using the variable $\ell$.

First restricting to the regular block, we identify $Z(u^\vee_\zeta)^{G^\vee}_{geom}$ as a $W$-representation with $\C[\Lambda/(h+1)\Lambda]$. We then rewrite all of $H^*(\Gr^{\gamma,\zeta})^{\widetilde{W}\cdot}$ by relating the singular blocks with the regular block. Both of these constructions use a second $W$-action on the cohomology, given by the affine Springer action, which is not obvious from the point of view of $Z(u_\zeta^\vee)^{G^\vee}$. 

Assuming Conjecture \ref{BBASV-conj}, the regular block part of Theorem \ref{thm:mainthm} agrees with an ungraded form of the conjecture formulated in \cite{LQ1}. 
In the case where $\fg = {\mathfrak{sl}}_n$, the formula of Theorem \ref{thm:mainthm} coincides with the formula conjectured by Igor Frenkel for the whole $G^\vee$-invariant part (computed for $n\leq 4$ in \cite{LQ2}):  
\begin{corollary}
\label{coro:catalan}
Let $G = SL_n$, and suppose that $n\not\equiv 0, -1$ mod $\ell$. Then  $${\rm dim}\, H^*(\Gr^{\gamma,\zeta})^{\widetilde{W}\cdot}={\rm Cat}_n((n+1)\ell-n,n)=\frac{1}{(n+1)\ell}\binom{(n+1)\ell}{n},$$ the rational $((n+1)\ell-n,n)$-Catalan number.
\end{corollary}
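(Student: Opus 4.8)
The Weyl group of $\fg=\mathfrak{sl}_n$ is the symmetric group $W=S_n$ and its Coxeter number is $h=n$, so the corollary is just the type $A_{n-1}$ instance of Theorem~\ref{thm:mainthm}; what remains is to evaluate the right-hand side ${\rm Cat}_{S_n}\bigl((n+1)\ell-n,\,n\bigr)$ explicitly and to check the coprimality condition that makes it an honest rational Catalan number.

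First I would invoke the product formula for the rational Coxeter--Catalan number in terms of the degrees $d_1,\dots,d_r$ of $W$,
$$
{\rm Cat}_W(a,h)\;=\;\prod_{i=1}^{r}\frac{a+d_i-1}{d_i},
$$
whose normalization is pinned down by the fact that its value at $a=h+1$ is the ordinary $W$-Catalan number $\prod_i\frac{h+d_i}{d_i}$. For $W=S_n$ the degrees are $2,3,\dots,n$, so
$$
{\rm Cat}_{S_n}(a,n)=\prod_{i=2}^{n}\frac{a+i-1}{i}=\frac{(a+1)(a+2)\cdots(a+n-1)}{n!}=\frac{(a+n-1)!}{a!\,n!}=\frac{1}{a+n}\binom{a+n}{n}=c_{a,n}.
$$
Substituting $a=(n+1)\ell-n$, so that $a+n=(n+1)\ell$, I read off
$$
{\rm Cat}_{S_n}\bigl((n+1)\ell-n,\,n\bigr)=\frac{1}{(n+1)\ell}\binom{(n+1)\ell}{n}=c_{(n+1)\ell-n,\,n},
$$
and together with Theorem~\ref{thm:mainthm} this gives the asserted value of $\dim H^*(\Gr^{\zeta,\gamma})^{\widetilde{W}}$.

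Finally I would check that $a=(n+1)\ell-n$ and $n$ are coprime, which is exactly what guarantees that $\tfrac{1}{a+n}\binom{a+n}{n}$ is an integer and equals the rational $(a,n)$-Catalan number. Since $(n+1)\ell-n\equiv\ell\pmod n$, we get $\gcd\bigl((n+1)\ell-n,\,n\bigr)=\gcd(\ell,n)$, and this is $1$ because $\ell$ is coprime to the determinant of the Cartan matrix of $\mathfrak{sl}_n$, which is $n$; I would also note that the hypotheses $n\not\equiv 0,-1\pmod\ell$ in the statement are in fact automatic under the standing assumptions $\ell>h=n$ and $\gcd(\ell,n+1)=1$. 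I do not anticipate any genuine obstacle here: the content is entirely the type $A$ evaluation of ${\rm Cat}_W$, and the only delicate point is fixing the normalization of the Coxeter--Catalan number correctly and matching it with the combinatorial convention for $c_{a,b}$.
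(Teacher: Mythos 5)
Your proof is correct and is exactly the route the paper intends: Corollary~\ref{coro:catalan} is stated as the type $A$ specialization of Theorem~\ref{thm:mainthm}, and you carry out the evaluation of ${\rm Cat}_{S_n}\bigl((n+1)\ell-n,\,n\bigr)$ that the paper leaves implicit, correctly identifying the normalization of the rational Coxeter--Catalan number and the coprimality needed for $c_{(n+1)\ell-n,n}$ to be the genuine rational Catalan number. One small point worth flagging, which you implicitly navigate: the paper's Definition~\ref{def:coxetercatalan} writes ${\rm Cat}_W(m,h)=\tfrac{1}{|W|}\prod(m+d_i)$, and for this to recover $\tfrac{1}{n+1}\binom{2n}{n}$ at $m=h+1$ in type $A_{n-1}$, the $d_i$ there must be the \emph{exponents} of $W$ rather than the degrees; your product $\prod_i\tfrac{a+d_i-1}{d_i}$ with $d_i$ the degrees is the same quantity in standard notation, so no harm is done, but the discrepancy is worth noticing. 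You also correctly observe that $\gcd\bigl((n+1)\ell-n,\,n\bigr)=\gcd(\ell,n)=1$ by the standing assumption that $\ell$ is coprime to $\det$ of the Cartan matrix, and that the hypotheses $n\not\equiv 0,-1\pmod\ell$ are automatic from $\ell>n$ and $\gcd(\ell,n+1)=1$. For context: the paper also contains a genuinely independent re-derivation of the same type $A$ identity (Theorem~\ref{thm:catalan} in Section 3.3.1), expressing $d_{n,\ell}$ as a Hall pairing $\langle\omega P_{\ell,n}\cdot 1,\nabla e_n\rangle|_{q=t=1}$ of elliptic Hall algebra operators and invoking the Shuffle theorem, Lemma~\ref{lem:signtwist}, and the Chinese remainder theorem. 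That route bypasses Theorem~\ref{thm:mainthm} entirely and stays in the bigraded symmetric-function setting, which is better suited to the $q,t$-refinements discussed later; for the ungraded statement of the Corollary, your direct product-formula evaluation is the more elementary and transparent argument.
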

The $G^\vee$-invariant parts of the blocks of
the center, indeed the entire blocks, can be equipped with two gradings that arise from
isomorphisms with the equivariant cohomologies of certain coherent sheaves on the
Springer resolution\cite{BL} (see Theorem \ref{block:coherent}). 

On the geometric side, the bigrading of the principal block is closely related to the isomorphism of $W-$representations $\C[\Lambda/(h+1)\Lambda]\otimes \sgn \cong \overline{DR}_W$, where the latter is Gordon's canonical quotient of the space of the diagonal coinvariants \cite{Gordon}. The latter space is algebraically defined and has an obvious bigrading.

The bigrading in the type A case can also be realized on the side of the affine Springer fibers by passing from $G=SL_n$ to the (non-simply connected) group $G=GL_n$. As we show in Theorem \ref{thm:bgrd}, the resulting bigrading coincides with that of the sign-twisted diagonal coinvariants. This geometric bigrading comes from the realization of the invariant piece of the cohomology as a quotient of the BM homology of the ''positive part" of the affine Springer fiber for $GL_n$, up to a linear dual.

In Section \ref{sec:hitchin} we also study another model for the bigrading, coming from the perverse filtration on certain parabolic Hitchin fibers of $G=SL_n$. This formulation is more geometric and allows us, for example,  to define an ${\mathfrak{sl}}_2$-action  on the blocks of $H^*(\Gr^{\gamma,\zeta})^{\tW\cdot}$. Based on the Hilb-vs-Quot conjecture of \cite{KiTr}, the two bigradings are the same up to a simple change of variables.

To complete the triangle between affine Springer fibers, coherent sheaves on the Springer resolution, and the diagonal coinvariants, we conjecture that the transport of either of our two bigradings to the blocks of the center coincides with the one coming from the equivariant cohomologies of the coherent sheaves on the Springer resolution. There is a natural  $\fs\fl_2$-action on the cohomologies on the affine Springer fiber side, which we conjecture to coincide with the ${\mathfrak{sl}}_2$ action ''along the diagonals" as in \cite[Section 4]{LQ1}. 

The outline of the paper is as follows. 
In Section \ref{sec:asf}, we define the geometric objects and prove Theorem \ref{thm:mainthm}. In Section \ref{sec:dr}, we study the diagonal coinvariant ring with a focus on type A, providing $H^*(\Gr^{\gamma,\zeta})$ with a combinatorially defined bigrading. In Section \ref{sec:smallquantum}, we recall some earlier results on the structure of the center of the small quantum group and apply results from the previous two sections to the structure of the Harish-Chandra center, the Higman ideal and the Verlinde quotient of $\fu_\zeta^\vee$. In Section \ref{sec:hitchin}, we study a geometrically defined bigrading on $H^*(\Gr^{\gamma,\zeta})$ and its relation to the center together with the bigrading defined in Section \ref{sec:dr}.

\subsection*{Acknowledgements}
We would like to thank Erik Carlsson and Peng Shan for helpful explanations of their work, and Roman Bezrukavnikov for sharing the paper \cite{BBASV} with us. Additionally, we would like to thank the referee for many helpful suggestions that significatly improved the text. 

N.H. appreciates the support of the grant number 182767 of the Swiss National Science Foundation. O.K. was supported by a project grant from the Väisälä Foundation.
A.L. is grateful for the support from Facult\'e des Sciences de Base at \'Ecole Polytechnique F\'ed\'erale de Lausanne.

\section{Affine Springer fibers}
\label{sec:asf}
\subsection{Definitions}
In this section, we define the geometric objects under study. For more details, the reader is referred to \cite[Section 4]{RW}.
Let $\zeta$ be a primitive $\ell$-th root of unity and $s\in \ft^{reg}$. Let $\cO:=\C[[t]], \cK:=\C((t))$, $\cOl := \C[[t^\ell]]$ and $\cK_\ell = \C((t^{\ell}))$. Let $\G_m^{rot}$ be the loop rotation torus. It acts on $\cO, \cK$ via $t\mapsto z  t$, as well as on $G(\cO)$ and $G(\cK)$. Let $\mu_\ell=\ker(\G_m^{rot}\xrightarrow{z \mapsto z^\ell} \G_m^{rot})$ be the subgroup of $\ell$th roots of unity.

Let $R$ be a $\C$-algebra. The fppf sheafification of the functor $$R\mapsto G(R((t)))/G(R[[t]])$$ is an ind-scheme called the affine Grassmannian which we will denote by $\Gr$.  More generally, we can define partial affine flag varieties for $G(\cK)$ as follows. The choice of $T$ fixes a standard apartment $A$ in the Bruhat--Tits building of $G(\cK)$. To any facet $\mathbf{f}\subset A$ there is an associated parahoric subgroup $\bP_\mathbf{f}$. 

For a given parahoric subgroup $\bP$, we write $\Fl_{\bP}$ for the corresponding partial affine flag variety. This is also an ind-scheme which coincides with $\Gr$ when $\bP=G(\cO)$; when $\bP=\bI$ is the standard Iwahori defined below, we simply write $\Fl$. We will also consider partial affine flag varieties and affine Springer fibers with the loop variable $t^\ell$ in place of $t$. Given a parahoric subgroup $\bP\subset G(\cK_\ell)$ we write $\Fl_{\bP}^\ell$  for the corresponding partial affine flag variety and so on; for example $\Gr^\ell$ is the fppf sheafification of the functor $$R\mapsto G(R((t^\ell)))/G(R[[t^\ell]]) .$$

The $\mu_\ell$-action on $G(\cK)$ descends to $\Gr$ and the fixed point ind-scheme can be described as follows. Fix a fundamental alcove $\mathbf{a} \subset A$ for the $\ell$-dilated dot action of the (extended) affine Weyl group. If $\lambda \in \Lambda\cap\overline{\mathbf{a}}$ is a cocharacter, we let $\bP_\lambda=\bP_{\mathbf{f}}$ for the unique facet $\mathbf{f}$ containing $\lambda$. For example when $\lambda$ is a regular weight, this is the standard Iwahori subgroup $\bI$, which can also be described as $\bI = ev_0^{-1}(B)$, where $ev_0 : G(\cO) \to G$ is the evaluation map induced by $t \mapsto 0$. As in \cite[§1.9]{RW}, a set of representatives for $\Lambda/\tW$  can be indexed by cocharacters in the closure of the $\ell$-dilated fundamental alcove. We will use this parametrization throughout. From \cite[Proposition 4.7]{RW} we have
\begin{proposition}
There is an isomorphism of ind-schemes
$$\Gr^{\mu_\ell}=\bigsqcup_{\lambda \in \Lambda/\widetilde{W}} \Fl^\ell_{\bP_\lambda} \; ,$$ where $\bP_\lambda$ is the parahoric group scheme in $G(\cK_\ell)$  associated to $\lambda$.  Here the extended affine Weyl group $\widetilde{W}$ is acting by the $\ell$-dilated action on $\Lambda$.
\end{proposition}
%\begin{remark}
For $\zeta$ a generator of $\mu_\ell$, we will write $\Gr^\zeta$ in place of   $\Gr^{\mu_\ell}$. 

%\end{remark}
Given $\gamma\in\fg(\cK)$, we denote by $$\Fl_{\bP}^\gamma=\{g\bP|\Ad_{g^{-1}}\gamma\in\Lie(\bP)\}$$ the affine Springer fiber of $\gamma$ inside $\Fl_\bP=G(\cK)/\bP$. Similarly, we denote 
$$\ffl_{\bP}^\gamma=\{[g]\in \Fl_{\bP_{\lambda}}|\Ad_{g^{-1}} \gamma \in \Lie(\text{Rad}(\bP))\},$$ where $\text{Rad}(\bP)$ denotes the pro-unipotent radical. The latter (ind-)varieties are generalizations of affine Springer fibers, termed ''affine Spaltenstein fibers" in \cite{BBASV}. Throughout this paper, we only use the underlying reduced ind-schemes of these spaces.

Let now $\gamma=t^{\ell-1}s$ and consider 
$\Gr^{\gamma,\zeta}=\Gr^\gamma\cap\Gr^{\zeta}$.
\begin{proposition}
\label{prop:affspalt}
We have an isomorphism of (ind-)varieties \begin{equation}
\label{eq:spfixedpoints}
\Gr^{\gamma,\zeta}=\bigsqcup_{\lambda \in \Lambda/\widetilde{W}} \ffl_{\bP_\lambda}^{t\gamma,\ell} \; .\end{equation} 
\end{proposition}
\begin{proof}
We work on closed points. Note that $$\Gr^\gamma\cong \{g\in \Gr|\Ad_{g^{-1}} s t^{\ell} \in t\fg(\cO)\}.$$ 
Moreover, we have that $G(\cK)^{\zeta}=G(\cK_\ell)$. 
Since the actions of $\zeta$ and $\gamma$ commute, 
\begin{align*} \Gr^{\gamma,\zeta} &=\bigsqcup_{\lambda \in \Lambda/\tW} \; \{g\in \Fl_{\bP_\lambda}^\ell|\Ad_{g^{-1}}t\gamma\in \fg(\cK_\ell)\cap \Ad_{t^{-\lambda}}(t\fg(\cO))\} \\ 
&= \; \bigsqcup_\lambda \; \{g\in \Fl^\ell_{\bP_{\lambda}}|\Ad_{g^{-1}} t\gamma \in \Lie(\text{Rad}(\bP_\lambda))\} \\ &= \; \bigsqcup_\lambda \;\ffl_{\bP_\lambda}^{t\gamma,\ell} \; , 
\end{align*} 
where the second equality follows from \cite[Lemma 4.3]{RW}.
\end{proof}

We finish this subsection by a brief discussion of the symmetries of $\Gr^{\gamma,\zeta}$. The $\Lambda\cong T(\cK)/T(\cO)$-action on $\Gr^\gamma$ coming from the stabilizer action commutes with $\zeta$, so we get a $\Lambda$-action on $\Gr^{\gamma,\zeta}$ and this preserves the decomposition in Proposition \ref{prop:affspalt}. On $H^*(\Gr^{\gamma,\zeta}),$ there is also an action of $W$ coming from varying $s\in \ft^{reg}$. These assemble to a left action of $\widetilde{W}$ on the cohomology. 
There is also another commuting right action of $\widetilde{W}$ on $H^*(\Gr^{\gamma,\zeta})$ coming from the Springer action. As mentioned before, the left action is called the dot action; we will call the right action the Springer action or, following \cite{CM}, the star action. These actions will be denoted $\tW\cdot$ and $\tW*$, respectively.

\subsection{A dimension formula for the cohomology}
\label{sec:dimformula}
In this section, we compute $\dim H^*(\Gr^{\gamma,\zeta})$ using affine Springer theory. 
 The strategy is as follows. Using Proposition \ref{prop:affspalt} we are reduced to computing each $\dim H^*(\ffl_{\bP_\lambda}^{t\gamma,\ell})^{\tW\cdot}$ individually and summing over the blocks. The first part is done using Springer theory in Proposition \ref{prop:invariants}. For the second part, we enumerate the blocks with a given stabilizer using the permutation representation of $W$ on $\Lambda/\ell \Lambda$. Finally, these results are shown to combine in the desired way in Theorem \ref{thm:dimension}.

Let 
$$\mathbf{e}=\frac{1}{|W|}\sum_{w\in W}w, \; \mathbf{e}^-=\frac{1}{|W|}\sum_{w\in W}(-1)^{\ell(w)}w$$ be the symmetrizing and antisymmetrizing idempotents for $W$. If $W_\lambda\subseteq W$ is a parabolic subgroup, we also let $\mathbf{e}_\lambda,\mathbf{e}_\lambda^-$ denote the corresponding idempotents. 

\begin{proposition}
\label{prop:invariants}
As ungraded $W$-representations, for the Springer action of $W$,
$$H^*(\ffl_{\bP_\lambda}^{t\gamma,\ell})^{\widetilde{W}\cdot} \cong \C[\Lambda/(h+1)\Lambda]\mathbf{e}^-_\lambda.$$ 
Here $W_\lambda$ is the stabilizer of $\lambda\in \Lambda$ inside $W$.
\end{proposition}
\begin{proof}
By \cite[Theorem 1.2]{BAL} we have that $H^*(\ffl_{\bI}^{t\gamma,\ell})^{\widetilde{W}\cdot}$ is isomorphic to $$\C[\Lambda/(h+1)\Lambda]$$ as a $W-$representation. We note that this is the $\sgn$-twist of $\DR_W$ \cite{Gordon}.
On the other hand, we claim that $H^*(\ffl_{\bP_\lambda}^{t\gamma,\ell})^{\widetilde W\cdot} \cong  H^*(\ffl_{\bI}^{t\gamma,\ell})^{\widetilde{W}\cdot}\mathbf{e}_\lambda^-$.
Note that there is a natural inclusion $$\ffl_{\bP_\lambda}^{t\gamma,\ell}\to \Fl_{\bP_\lambda}^{t\gamma,\ell}:=\{g\in \Fl_{\bP_\lambda}|\Ad_{g^{-1}}t^\ell s\in\Lie(\bP_\lambda)\},$$ 
and that we always (i.e. for any parahoric containing $\bI$ and any regular semisimple $\gamma$) have a Cartesian diagram
\[\begin{tikzcd}
	{\mathcal{F}l_{\mathbf{I}}^\gamma} && {[\widetilde{\mathfrak{l}}_{\mathbf{P}}/L_{\mathbf{P}}]} \\
	\\
	{\mathcal{F}l_{\mathbf{P}}^\gamma} && {[\mathfrak{l}_{\mathbf{P}}/L_{\mathbf{P}}]}
	\arrow[from=1-1, to=1-3]
	\arrow[from=1-1, to=3-1]
	\arrow[from=1-3, to=3-3]
	\arrow[from=3-1, to=3-3]
\end{tikzcd} ,\]
where the right-hand column is the Grothendieck--Springer resolution for $L_\bP$, the Levi quotient of $\bP$.
Taking the fiber at $0$ of the bottom map gives exactly $\ffl_\bP^\gamma$. The cohomology of this fiber is exactly the $W_\lambda$-antisymmetric part of the pullback of the Springer sheaf (see \cite[Lemma 2.2]{GKO}), so after noting that everything commutes with the $\widetilde{W}$-action, we are done.
\end{proof}

Recall that $$\mathbf{a}=\{\lambda \in A|\forall \alpha\in \Phi^+, 0<\langle \lambda, \alpha\rangle<\ell\}$$ is the $\ell$-dilated fundamental alcove for $G$. We would like to compute the number of blocks $u^{\vee,\lambda}_\zeta$ for the small quantum group for $\lambda \in \overline{\mathbf{a}}\cap \Lambda$ of a given type.  
By the definiton of $\mathbf{a}$, there is a bijection 
$\overline{\mathbf{a}}\cap \Lambda\leftrightarrow \Lambda/\ell \Lambda$. 
The stabilizer in the finite Weyl group $W\subset \widetilde{W}$ of $q\in \Lambda/\ell \Lambda$ is by \cite[Proposition 4.1]{Sommers} a parabolic subgroup of $W$. Fixing the type alluded to above is exactly fixing the conjugacy class of the stabilizer $W_\lambda \subseteq W$. It follows from Proposition \ref{prop:invariants} that the contribution of $H^*(\ffl_{\bP_\lambda}^{\gamma,\ell})^{\tW\cdot}$ to the dimension only depends on this data.
\begin{remark}
Without our assumptions on $\ell$, which for example imply $\ell$ is ''very good" in the sense of \cite{Sommers}, the $W_\lambda$ appearing above are in general only so called {\em quasi-parabolic} subgroups of $W$. We will, however, not need them.
\end{remark}

By \cite[Theorem 7.4.2]{HaimanConjectures}, the total number of $W$-orbits in $\Lambda/\ell \Lambda$ is \begin{equation}\label{eq:allorbits}
\frac{1}{|W|}\prod (\ell+e_i),\end{equation}
and the number of regular orbits is 
\begin{equation}
\label{eq:regularorbits}
\frac{1}{|W|}\prod(\ell-e_i).\end{equation}
Here the numbers $e_i$ are  the exponents of $W$. They are defined by $e_i = d_i - 1$ where the $d_i$ are the degrees of the homogenous generators of $\C[\ft]^W$. The quantity in \eqref{eq:allorbits} merits a name and plays a significant role in so called Coxeter--Catalan combinatorics. See for example \cite{ThielThesis}. 
\begin{definition}
\label{def:coxetercatalan}
Let $m$ be coprime to $h$. We define the {\em $m/h$-Coxeter--Catalan number of $W$} to be
$${\rm Cat}_W(m,h)=\frac{1}{|W|}\prod (m+e_i).$$
\end{definition}
Going back to our enumeration problem, 
let $\{W_\lambda\}$ be a set of representatives of parabolic subgroups of $W$. Now, $\C[\Lambda/\ell \Lambda]$ is by definition a permutation representation of $W$, so by orbit-stabilizer splits as $$\C[\Lambda/\ell \Lambda]=\bigoplus_\lambda d_{\lambda,\ell} \Ind_{W_\lambda}^W (1)\;,$$ where $d_{\lambda,\ell}\in \Z_{\geq 0}$. 
On the other hand, by Proposition \ref{prop:affspalt} and Proposition \ref{prop:invariants}, 
we have that 
\begin{equation}\label{dhl}
\dim H^*(\Gr^{\gamma,\zeta})^{\tW\cdot}=\sum_{\lambda}d_{\lambda,\ell}\dim \C[\Lambda/(h+1)\Lambda]\mathbf{e}_\lambda^-.
\end{equation}

\begin{definition}
The {\em $\ell/h$-rational Kreweras number of type $\lambda$ for $G$} is by definition the coefficient $d_{\lambda,\ell}$ in the above decomposition. In other words, it is the number of $W$-orbits in $\Lambda/\ell \Lambda$ with a given stabilizer.
\end{definition}
Explicit formulas for the Kreweras numbers for classical groups can be found in \cite{Sommers}; in type A they are simply multinomial coefficients by \cite{ALW}. There is also a general formula in terms of hyperplane arrangements \cite[Proposition 5.1]{Sommers}. We note that since each $\Ind_{W_\lambda}^W(1)$ contributes a trivial representation of $W$, the sum of $d_{\lambda,\ell}$ equals the dimension of $\C[\Lambda/\ell \Lambda]^W$. On the other hand, we have the following observation, originally going back to Haiman's work.
\begin{proposition}
\label{prop:kreweras}
The sum of the Kreweras numbers over the representatives $\{W_\lambda\}$ is the $\ell/h$-Coxeter--Catalan number for $W$:
$$\sum_\lambda d_{\lambda,\ell}={\rm Cat}_W(\ell,h).$$ In particular, $\dim \C[\Lambda/\ell \Lambda]={\rm Cat}_W(\ell,h)$.
\end{proposition}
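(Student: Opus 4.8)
The plan is to unwind the definitions on both sides of the claimed equality and observe that it is essentially a tautology once the orbit-counting interpretation is in place. On the left, the $\ell/h$-rational Kreweras number $d_{\lambda,\ell}$ is by definition the multiplicity of $\mathrm{Ind}_{W_\lambda}^W(1)$ in the permutation representation $\C[Q/\ell Q]$, equivalently the number of $W$-orbits on $Q/\ell Q$ whose stabilizer is $W$-conjugate to $W_\lambda$. Summing over the chosen representatives $\{W_\lambda\}$ of conjugacy classes of parabolic subgroups, and using that by \cite[Proposition 4.1.]{Sommers} every point stabilizer is (conjugate to) such a parabolic, $\sum_\lambda d_{\lambda,\ell}$ counts \emph{all} $W$-orbits on $Q/\ell Q$ without restriction.

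The second step is to identify this total number of orbits with $\mathrm{Cat}_W(\ell,h)$. By Burnside / orbit-counting, or more directly by taking the multiplicity of the trivial representation on both sides of the decomposition $\C[Q/\ell Q]=\bigoplus_\lambda d_{\lambda,\ell}\,\mathrm{Ind}_{W_\lambda}^W(1)$ and using $\langle \mathrm{Ind}_{W_\lambda}^W(1),1\rangle_W = 1$, one gets that the number of orbits equals $\dim \C[Q/\ell Q]^W$. This is exactly the quantity \eqref{eq:allorbits}, namely $\frac{1}{|W|}\prod_i(\ell+d_i)$, which by Definition \ref{def:coxetercatalan} is $\mathrm{Cat}_W(\ell,h)$ (the hypotheses on $\ell$ from the Introduction guarantee $\ell$ is coprime to $h$, so the definition applies). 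Combining the two steps gives $\sum_\lambda d_{\lambda,\ell} = \mathrm{Cat}_W(\ell,h)$.

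There is essentially no obstacle here: the statement is flagged in the excerpt as following ``directly from the definitions,'' and the only thing to be careful about is that $\{W_\lambda\}$ runs over conjugacy-class representatives (not all parabolic subgroups), so that each $W$-orbit is counted exactly once in the sum — this is precisely what makes $\sum_\lambda d_{\lambda,\ell}$ the honest orbit count rather than an overcount. One should also invoke \cite[Proposition 4.1.]{Sommers} (together with the running assumptions on $\ell$) to know that no non-parabolic, merely quasi-parabolic, stabilizers occur, so that the decomposition of $\C[Q/\ell Q]$ indexed by $\{W_\lambda\}$ is complete. With those remarks in place the proof is a one-line bookkeeping argument.
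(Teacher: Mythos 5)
Your proof is correct and matches the paper's (implicit) argument: the paper gives no separate proof, stating the proposition follows directly from the definitions, and your unwinding — each orbit contributes its stabilizer type once, so the sum over conjugacy-class representatives is the total orbit count, which equals $\frac{1}{|W|}\prod_i(\ell+d_i)={\rm Cat}_W(\ell,h)$ via Eq.~\eqref{eq:allorbits} — is exactly that bookkeeping made explicit.
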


The main result of this section is the following theorem, which also proves Theorem \ref{thm:mainthm} from the introduction.
\begin{theorem}
\label{thm:dimension}
Assume $\ell$ is as in the introduction. Then
we have $$\dim H^*(\Gr^{\gamma,\zeta})^{\tW\cdot}={\rm Cat}_W(\ell(h+1)-h,h).$$
\end{theorem}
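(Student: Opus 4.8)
The plan is to reduce the statement to a purely combinatorial identity among Coxeter-Catalan numbers and then prove that identity. The starting point is the decomposition $\C[Q/\ell Q]=\bigoplus_\lambda d_{\lambda,\ell}\Ind_{W_\lambda}^W(1)$ from the previous section, together with the description $\DR_W^{W_\lambda}=\Hom_W(\Ind_{W_\lambda}^W(1),\DR_W)$. Since as a $W$-representation $\DR_W\cong \sgn\otimes\C[Q/(h+1)Q]$, we get
\[
\dim \DR_W^{W_\lambda}=\big\langle \Ind_{W_\lambda}^W(1),\ \sgn\otimes\C[Q/(h+1)Q]\big\rangle_W
=\big\langle \Ind_{W_\lambda}^W(\sgn),\ \C[Q/(h+1)Q]\big\rangle_W .
\]
Plugging this into \eqref{dhl} and using the $d_{\lambda,\ell}$-decomposition of $\C[Q/\ell Q]$ in reverse, the sum collapses to a single Hall pairing of two permutation-type characters,
\[
d_{h,\ell}=\big\langle \C[Q/\ell Q],\ \sgn\otimes\C[Q/(h+1)Q]\big\rangle_W
=\dim\big(\C[Q/\ell Q]\otimes \sgn\otimes\C[Q/(h+1)Q]\big)^W .
\]
So the first step is to carry out exactly this bookkeeping and record that $d_{h,\ell}=\dim\big(\C[Q/\ell Q]\otimes\C[Q/(h+1)Q]\otimes\sgn\big)^W$.

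The second step is to identify this invariant dimension with a Coxeter-Catalan number. The natural tool is Corollary \ref{cor:signtwist}, or rather its proof via the elliptic homogeneous affine Springer fibers and Sommers' result that $\C[Q/mQ]$ for $m$ coprime to $h$ is the cohomology of the affine Springer fiber $\Gr^{\gamma_{m/h}}$. The key geometric input is that the $\sgn$-twist of $\C[Q/mQ]$ and the cohomology of a \emph{parahoric} affine Springer fiber are related by the Cartesian-diagram/antisymmetrization argument already used in Proposition \ref{prop:invariants} and Lemma \ref{lem:signtwist}. Concretely, I expect $\dim\big(\C[Q/\ell Q]\otimes\C[Q/(h+1)Q]\otimes\sgn\big)^W$ to be computed as $\dim H^*(\Fl_\bI^{\gamma})^{\widetilde W}$ for a suitable regular semisimple $\gamma$ of slope related to $\ell$ and $h$, and then by \cite{BAL}-type statements this cohomology has dimension $\frac{1}{|W|}\prod_i(m+d_i)$ for the appropriate $m$. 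Matching exponents: one factor $\C[Q/\ell Q]$ corresponds to ``scaling by $\ell$'', the factor $\DR_W$ (i.e. $\C[Q/(h+1)Q]\otimes\sgn$) corresponds to ``shift by $h$'' via the divide-by-$t$ trick in Corollary \ref{cor:signtwist} that trades slope $m/h$ for $(m-h)/h$, so the composite should give slope $(\ell(h+1)-h)/h$, i.e. $d_{h,\ell}={\rm Cat}_W(\ell(h+1)-h,h)$, which is exactly the claim.

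A cleaner, more self-contained alternative for the second step, which I would run in parallel, is to use the known product formula (Haiman, \cite{HaimanConjectures}) for the number of $W$-orbits on $Q/mQ$ and the fact that the regular orbits are counted by $\frac{1}{|W|}\prod(m-d_i)$; combined with the $\sgn$-twist interpretation of Corollary \ref{cor:signtwist} and the $q=t=1$ specialization of the Frobenius-character computations, one gets an explicit finite sum over the parabolic types $\lambda$ that telescopes. In type $A$ this is transparent: $d_{h,\ell}=\sum_\lambda (\text{rational Kreweras number})\cdot\langle h_\lambda,\nabla e_n\rangle|_{q=t=1}=\langle e_1^{n}\text{-type character}, \dots\rangle$ collapses to $\frac{1}{(n+1)\ell-n+1}\binom{(n+1)\ell-n+1}{\,\cdot\,}$ matching Corollary \ref{coro:catalan}; the general-$W$ case follows the same pattern with $\prod(m+d_i)/|W|$ in place of the binomial, taking $m=\ell(h+1)-h$.

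The main obstacle I anticipate is the second step: justifying the precise slope arithmetic that turns ``$\times\,\C[Q/\ell Q]$'' together with ``$\times\,\DR_W$'' into the single Coxeter-Catalan number at parameter $\ell(h+1)-h$, i.e. making rigorous the claim that tensoring the permutation module $\C[Q/\ell Q]$ with the graded regular representation-like module $\DR_W$ and taking $W$-invariants again produces a $\C[Q/mQ]$-type count with $m=\ell(h+1)-h$. This requires either a direct lattice-point/root-system argument (counting $W$-orbits on $Q/\ell Q\times Q/(h+1)Q$ with sign weights, e.g. via an Ehrhart-type or Shephard-Todd-type product formula for the invariants of a product of two dilated-alcove permutation representations) or the geometric identification above via homogeneous affine Springer fibers; the hypothesis that $\ell$ be coprime to $h$ and $h+1$ (``very good'') is exactly what is needed for both the parabolic-stabilizer statement of \cite{Sommers} and the product formula to hold without correction terms, so the coprimality assumptions in the theorem should be used precisely here.
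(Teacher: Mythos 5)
Your first step is exactly the paper's: unwinding Frobenius reciprocity over the orbit decomposition of $\C[Q/\ell Q]$ to get $d_{h,\ell}=\dim\bigl(\C[Q/\ell Q]\otimes \C[Q/(h+1)Q]\otimes \sgn\bigr)^W$. The gap is in your second step, and you flag it yourself: the ``slope arithmetic'' turning the pair $(\ell,h+1)$ into the single parameter $\ell(h+1)-h$ is left as a heuristic (``I expect'', ``should give''), and neither of your proposed routes (a sign-weighted orbit count on $Q/\ell Q\times Q/(h+1)Q$, or a geometric identification via homogeneous affine Springer fibers of some composite slope) is actually carried out. The missing idea is a one-line observation: since $\ell$ is coprime to $h+1$, the Chinese Remainder Theorem gives an isomorphism of $W$-sets $Q/\ell Q\times Q/(h+1)Q\cong Q/\ell(h+1)Q$, hence $\C[Q/\ell Q]\otimes\C[Q/(h+1)Q]\cong\C[Q/\ell(h+1)Q]$ as $W$-representations. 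With that in hand your target space is just $\C[Q/\ell(h+1)Q]\mathbf{e}^-$, and Corollary \ref{cor:signtwist} (the divide-by-$t$ trick you cite, applied with $m=\ell(h+1)$) converts it to $\C[Q/(\ell(h+1)-h)Q]\mathbf{e}$, whose dimension is the number of $W$-orbits on $Q/(\ell(h+1)-h)Q$, i.e.\ ${\rm Cat}_W(\ell(h+1)-h,h)$. So the coprimality hypotheses enter twice: once for the CRT step, and once so that Sommers' framework applies to the relevant parameters; your guess that they are needed ``for the parabolic-stabilizer statement and the product formula'' points at the second use but misses the first, which is the one that actually makes the composite parameter $\ell(h+1)$ appear.

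Two smaller points. Your parallel ``telescoping sum'' alternative is not a proof as stated, and the type $A$ expression you write, $\frac{1}{(n+1)\ell-n+1}\binom{(n+1)\ell-n+1}{\,\cdot\,}$, does not match the correct count $\frac{1}{(n+1)\ell}\binom{(n+1)\ell}{n}$ from Corollary \ref{coro:catalan} (the rational $(a,b)$-Catalan number $\frac{1}{a+b}\binom{a+b}{b}$ with $a=(n+1)\ell-n$, $b=n$). Also, no new geometric input beyond Lemma \ref{lem:signtwist}/Corollary \ref{cor:signtwist} is needed once the CRT identification is made; invoking a new affine Springer fiber of composite slope is unnecessary.
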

\begin{proof}
We may interpret the summation over $\lambda$ on the RHS of Eq. \eqref{dhl} as follows.
Each orbit of type $\lambda$ contributes an $\Ind_{W_\lambda}^W (1)$ to the representation $\Lambda/\ell \Lambda$. On the other hand, $\dim \C[\Lambda/(h+1)\Lambda]\mathbf{e}_\lambda^-$ is by Frobenius reciprocity
$$\dim\Hom_W(\Ind_{W_\lambda}^W(1),\C[\Lambda/(h+1)\Lambda]\otimes \sgn),$$ so we can write $$\dim H^*(\Gr^{\gamma,\zeta})^{\tW\cdot}=\dim\Hom_W(\C[\Lambda/\ell \Lambda],\C[\Lambda/(h+1)\Lambda]\otimes\sgn).$$
By self-duality of $W-$representations and adjunction, the latter is
$$=\dim\Hom_W(\sgn, \C[\Lambda/\ell \Lambda] \otimes \C[\Lambda/(h + 1)\Lambda]).$$   Finally, by our assumptions on $\ell$, the map $$\Lambda/\ell \Lambda\cong (\Z/\ell(h+1)\Z)^r\to(\Z/\ell\Z)^r\times(\Z/(h+1)\Z)^r\cong \Lambda/\ell \Lambda\times \Lambda/(h+1)\Lambda$$ given by
$$(a_1,\ldots,a_r)\mapsto ((a_1 \text{ mod } \ell,\ldots,a_r\text{ mod }\ell),(a_1\text{ mod }(h+1),\ldots,a_r\text{ mod }(h+1))$$ is an isomorphism of $W-$sets.
Therefore, $\C[\Lambda/\ell \Lambda]\otimes \C[\Lambda/(h+1)\Lambda]\cong \C[\Lambda/\ell(h+1)\Lambda]$ as $W$-representations. 
%I believe it is true in general, for classical groups it follows from \url{https://www.mat.univie.ac.at/~slc/wpapers/FPSAC2025/156.pdf}}
 Finally, we have that
\begin{equation}\label{eq:chineseremainder}
H^*(\Gr^{\gamma,\zeta})^{\tW\cdot}=\dim\Hom_W(\sgn, \C[\Lambda/\ell(h + 1)\Lambda]).\end{equation}
To conclude the proof, we need the following lemma.
\begin{lemma}
\label{lem:signtwist}
Let $m>h$ be coprime to $h$. Then we have an isomorphism of vector spaces
$$\C[\Lambda/m \Lambda]\mathbf{e}^-\cong  \C[\Lambda/(m-h)\Lambda]\mathbf{e} \;.$$
\end{lemma}
\begin{proof}
Since only the regular orbits contribute sign representations, this follows from the relation between the number of all orbits vs. the number of regular orbits of $W$ in $\Lambda/m\Lambda$, 
given by Eq. \eqref{eq:allorbits} and Eq. \eqref{eq:regularorbits}.
\end{proof}
To finish the proof of Theorem \ref{thm:dimension}, the RHS of Eq. \eqref{eq:chineseremainder} is by Lemma \ref{lem:signtwist} $$\C[\Lambda/\ell(h+1)\Lambda]\mathbf{e}^-\cong \C[\Lambda/(\ell(h+1)-h)\Lambda]^W.$$ The dimension of this space is ${\rm Cat}_W((h+1)\ell-h,h)$ by Proposition \ref{prop:kreweras}.
\end{proof}
\begin{remark}
We believe that the $\gcd(h+1,\ell)=1$ is not needed.
\end{remark}
We now give some examples of Theorem \ref{thm:dimension}.

\begin{example}
\label{ex:dihedral}
For $W$ of dihedral type, there are three distinct types of orbits, which one can compute by hand or using Eqs. \eqref{eq:allorbits}--\eqref{eq:regularorbits}, generalizing the Alfano--Reiner results from \cite[Section 7.5]{HaimanConjectures}. For example for $G$ of type $B_2$, we have $1$ maximally singular orbit (the origin), $\frac{(\ell-1)(\ell-3)}{8}$ regular orbits, and $\ell-1$ subregular orbits.
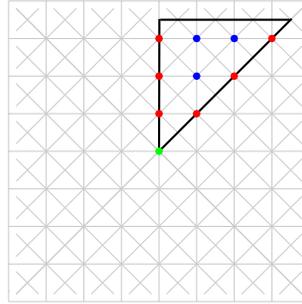
\begin{figure}[h]
\begin{center}
\begin {tikzpicture}
\begin {rootSystem}{B}
\draw[thick] \weight{0}{-0.4} -- \weight{0}{7.45};
\draw[thick] \weight{0.2}{-0.4} -- \weight{-3.79}{7.6};
\draw[thick] \weight{0.25}{7} -- \weight{-3.75}{7};
\wt [green]{0}{0}
\wt [red]{0}{2}
\wt [red]{0}{4}
\wt [red]{0}{6}
\wt [red]{-1}{2}
\wt [blue]{-1}{4}
\wt [red]{-2}{4}
\wt [blue]{-1}{6}
\wt [blue]{-2}{6}
\wt [red]{-3}{6}
\end {rootSystem}
\end {tikzpicture}
\caption{The dilated fundamental alcove for $B_2$ and dominant weights in it for $\ell=7$. Different colors correspond to different stabilizer types.}
\end{center}
\end{figure}
For $G$ of type $G_2$, we have the origin, $\frac{(\ell-1)(\ell-5)}{12}$ regular orbits, and $\ell-1$ subregular orbits.

From these numbers, we get the following dimension formulas. In the case of $B_2$  one computes 
$$\dim H^*(\Gr^{\gamma,\zeta})^{\tW\cdot}=25\cdot \frac{(\ell-1)(\ell-3)}{8}+10\cdot (\ell-1)+1,$$ and in the case of $G_2$ one has 
$$49\cdot \frac{(\ell-1)(\ell-5)}{12}+21\cdot (\ell-1)+1.$$
\end{example}
\begin{remark}
We remark that the above examples can be checked to match the Hochschild cohomology computations in \cite{HemelsoetVoorhaar}, where the dimension of the center of the quantum group is computed in several examples using coherent techniques. In particular, in \cite[Sections 4 and 5]{HemelsoetVoorhaar} the dimension of the $G^\vee$-invariant part of the center of the small quantum group is computed for all blocks in types $A_1, A_2, A_3, A_4, B_2, G_2$. These dimensions match the result of Theorem \ref{thm:dimension}, so Conjecture \ref{BBASV-conj} is confirmed in all these cases. 
\end{remark}

\section{Diagonal coinvariants}
\label{sec:dr}
Recall that the Cartan subalgebra $\ft \subset \fg$ carries an irreducible representation of the Weyl group $W$. Consider the ring of diagonal coinvariants $${\rm DR}_W:=\C[\ft \times \ft^*]/\C[\ft \times \ft^*]_+^W.$$ 
Here, $\C[\ft\times \ft^*]$ is naturally bigraded by giving a basis of $\ft^*$ bidegree $(1,0)$ and the dual basis in $\ft$ bidegree $(0,1)$. By definition, $\C[\ft \times \ft^*]_+^W$ is the doubly homogeneous ideal of diagonally invariant polynomials without constant term for $W$. In \cite{Gordon} a further representation-theoretically significant quotient ${\rm DR}_W\twoheadrightarrow\DR_W$ is defined and its structure as a $W$-module is studied.

In particular, it is shown in {\em loc. cit.} that the dimension of $\DR_W$ is $(h+1)^r$, where $r = {\rm rank}(\fg)$, and $h$ its Coxeter number \cite[Theorem 1.4]{Gordon}. Moreover, as a $W$-representation, $\DR_W\cong \sgn\otimes \C[\Lambda/(h+1)\Lambda]$.
Let $\lambda\in \Lambda,$ and $W_\lambda\subseteq W$ be the stabilizer of $\lambda$ as before. Consider the space of invariants \begin{equation}\label{eq:DRW}\DR_W^\lambda:=\DR_W^{W_\lambda}=\Hom_{W_\lambda}(\triv,\DR_W).\end{equation} 

By Frobenius reciprocity, the latter is the same as $\Hom_{W}(\Ind_{W_\lambda}^W(\triv),\DR_W)$. By Proposition \ref{prop:invariants}, 
\begin{equation} \label{block:DR}
H^*(\ffl_{\bP_\lambda}^\gamma)^{\tW\cdot}\cong \C[\Lambda/(h+1)\Lambda]\mathbf{e}_\lambda^-\cong (\DR_W \otimes \sgn)\mathbf{e}_\lambda^-  \;.
\end{equation} 
Endowing $\DR_W\otimes\sgn$ with its natural  $W-$invariant bigrading we may try to upgrade results from the previous section to include this bigrading.

\subsection{Rational shuffle theorems and the center in type A}
In this subsection, we will reinterpret the proof of Theorem \ref{thm:dimension} for $G$ of type $A$ using the language of rational shuffle theorems \cite{CMShuffle, Mellit}. We hope this will give an illuminating inroad to understanding the bigrading on the center.

When $\fg = \mathfrak{gl}_n$ we have $W = S_n$ and we write 
$$\DR_{S_n}=\DR_{n} = \mathrm{DR}_n = \frac{\C[x_1,\ldots,x_n,y_1,\ldots,y_n]}{\C[x_1,\ldots,x_n,y_1,\ldots,y_n]_+^{W}}.$$  Similarly, if $S_\lambda\subseteq S_n$ is a Young subgroup, we write $\DR_n^\lambda$ for the space of invariants in Eq. \eqref{eq:DRW}.

Let $\Sym_{q,t}[X]$ be the ring of symmetric functions over $\Q(q,t)$ in the alphabet $X=\{x_1,x_2,\ldots\}$ and let $\nabla$ be the nabla operator of \cite{BGHT99}, diagonal in the basis of modified Macdonald polynomials. 
Let $\{e_\lambda\},\{p_\lambda\},\{h_\lambda\},\{m_\lambda\},\{s_\lambda\}$ be the bases of elementary, power sum, complete homogeneous, monomial, and Schur symmetric functions, and $\omega=\omega_X$ the usual involution on symmetric functions.

Consider the Frobenius characteristic map $$\Frob_{q,t}: \Rep_{\Z^2-\text{graded}}(W)\to K_0(\Rep_{\Z^2-\text{graded}}(W)),$$ which takes a doubly graded representation to its class in the Grothendieck group, where a representation in bigrading $(i,j)$ is weighted by $q^it^j$. 
When $W=S_n$, we can and will further identify $K_0(\Rep_{\Z^2-\text{graded}}(W))\cong \Sym_{q,t}^n$ by sending the Specht module labeled by $\lambda$ to the Schur function $s_\lambda$. 

Analogously to Eq. \eqref{eq:DRW}, the bigraded dimension of $\DR^\lambda_n$ is given by the Hall inner product of Frobenius characters:  \begin{equation} \label{DRlambda} 
{\rm dim}_{q,t}(\DR_W^\lambda) =  \langle \Frob_{q,t}(\Ind_{W_\lambda}^W(\triv)),\Frob_{q,t}(\DR_W)\rangle.
\end{equation} Obviously, $\dim(\DR_W^\lambda)=\dim_{q,t}(\DR_W^\lambda)_{q=t=1}$.

The $q,t$-Frobenius character of $\DR_W$ is shown in \cite{HaimanVanishing} to be $\Frob_{q,t}(\DR_W)=\nabla e_n$.
In this case, we also have the following more explicit statement about the bigraded dimension of $\DR_n^{W_\lambda}$.
\begin{proposition}
\label{prop:bigradedinvariants}
Let $\lambda \in \Lambda $ and $W_\lambda \subset S_n$ be the stabilizer of $\lambda$. Then 
\[ \dim_{q,t}(\DR_n^\lambda) = \langle h_\lambda, \nabla e_n \rangle ,\] 
\end{proposition} 
\begin{proof} 
It is well known that the ungraded Frobenius character of $\Ind_{W_\lambda}^W(triv)$ is given by $h_\lambda$, the homogeneous symmetric function attached to $\lambda$. 
Therefore, we compute using (\ref{DRlambda}): 
\[ \dim_{q,t}(\DR^\lambda_n)= \langle \Frob_{q,t}(\Ind_{W_\lambda}^W(\triv)),\Frob_{q,t}(\DR_n)\rangle = \langle h_\lambda, \nabla e_n\rangle .\] 
\end{proof} 
We remark that the expression $\nabla e_n$ can also be written as $P_{n+1,n}\cdot 1$, where $P_{m,n}$ for $m,n\geq 0$ are certain elliptic Hall algebra operators as in \cite{Negut}, acting on the space $\Sym_{q,t}$. These operators play an important role for example in rational Catalan combinatorics \cite{CMShuffle, ALW} and the point-counting on affine Springer fibers \cite{KiTs}. Combinatorial expressions for $P_{m,n}\cdot 1$ are the subject of the rational shuffle theorem of e.g. \cite{Mellit}.
We refer to \cite{Negut} for the precise definition of the operators $P_{m,n}$ and their action on $\Sym_{q,t}$.
Unsurprisingly, these operators  also provide a convenient language to understand the appearance of the rational Catalan numbers ${\rm Cat}_n((n+1)\ell-n,n)$ from Theorem \ref{thm:dimension}, which we now reprove in this language.
\begin{theorem}
\label{thm:catalan}
Suppose $\ell$ is as in the introduction, i.e. odd and $n\not\equiv 0, -1$ mod $\ell$. We have $$\dim H^*(\Gr^{\gamma,\zeta})^{\tW\cdot}={\rm Cat}_n((n+1)\ell-n,n)=\frac{1}{(n+1)\ell}\binom{(n+1)\ell}{n},$$ the rational $((n+1)\ell-n,n)$-Catalan number.
\end{theorem}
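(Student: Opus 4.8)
The plan is to deduce Theorem \ref{thm:catalan} as the type $A$ specialization of Theorem \ref{thm:dimension}, but following the more combinatorial route suggested by the surrounding text, so as to make the parking-function bookkeeping explicit. Recall that $d_{n,\ell}=\sum_{\lambda\in P(n)} d_{\lambda,\ell}\,\langle h_\lambda,\nabla e_n\rangle|_{q=t=1}$. Since $\{h_\lambda\}$ and $\{m_\lambda\}$ are dual for the Hall inner product, and since $\nabla e_n|_{q=t=1}=\sum_{\lambda} a_\lambda\, m_\lambda$ where $a_\lambda$ counts the parking functions on $n$ cars whose multiset of car labels has content $\lambda$ (equivalently, Dyck paths with vertical-run multiplicities recording $\lambda$), we get $\langle h_\lambda,\nabla e_n\rangle|_{q=t=1}=a_\lambda$. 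Hence $d_{n,\ell}=\sum_{\lambda} d_{\lambda,\ell}\, a_\lambda$.

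Next I would interpret this sum representation-theoretically exactly as in the proof of Theorem \ref{thm:dimension}: $d_{\lambda,\ell}$ is the multiplicity of $\mathrm{Ind}_{W_\lambda}^{W}(1)$ in $\C[Q/\ell Q]$ (Proposition \ref{prop:numberoforbits} gives $d_{\lambda,\ell}=\frac1\ell\binom{\ell}{m_0(\lambda),\dots,m_n(\lambda)}$), while $a_\lambda=\dim \mathrm{Hom}_W(\mathrm{Ind}_{W_\lambda}^W(1),\DR_n)$ since the ungraded Frobenius character of $\DR_n$ is $\nabla e_n|_{q=t=1}$ and the ungraded Frobenius character of $\mathrm{Ind}_{W_\lambda}^W(1)$ is $h_\lambda$. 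Therefore $d_{n,\ell}=\dim\mathrm{Hom}_W(\C[Q/\ell Q],\DR_n)$. Now invoke $\DR_n\cong \mathrm{sgn}\otimes\C[Q/(n+1)Q]$ as $S_n$-modules (here $h=n$), so the Hom-space is the sign-isotypic part $\big(\C[Q/\ell Q]\otimes\C[Q/(n+1)Q]\big)\mathbf{e}^-$. The hypothesis that $\ell$ is coprime to $n+1$ lets the Chinese remainder theorem give $\C[Q/\ell Q]\otimes\C[Q/(n+1)Q]\cong\C[Q/\ell(n+1)Q]$ as $S_n$-modules, and then Corollary \ref{cor:signtwist} (with $m=\ell(n+1)$, which is $>h=n$ and coprime to $n$ by the assumption $n\not\equiv 0,-1\bmod\ell$) identifies $\C[Q/\ell(n+1)Q]\mathbf{e}^-\cong\C[Q/(\ell(n+1)-n)Q]\mathbf{e}$, whose dimension is the number of $S_n$-orbits in $Q/((n+1)\ell-n)Q$, i.e. $\mathrm{Cat}_{S_n}((n+1)\ell-n,n)=\frac{1}{(n+1)\ell}\binom{(n+1)\ell}{n}$.

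Alternatively — and this is the "illuminating" variant the text seems to want — one can avoid the representation theory and argue purely by counting lattice points: $Q/\ell Q$ is $S_n$-equivariantly identified with rational $(\ell,n)$-parking functions (Dyck paths in the $\ell\times n$ rectangle with increasing column labels), and $\C[Q/(n+1)Q]$ likewise with ordinary parking functions on $n$ cars. One checks directly that pairing a $(\ell,n)$-Dyck path of vertical-run type $\lambda$ against the $a_\lambda$ parking functions of content $\lambda$ and then taking the sign idempotent amounts to choosing, within each orbit, a single "most antisymmetric" representative; after the Chinese remainder matching this becomes an orbit count for the $((n+1)\ell-n)\times n$ rectangle, i.e. the rational Catalan number. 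I would spell out the bijection on vertical-run data so the reader sees the multinomial coefficients $\frac1\ell\binom{\ell}{m_0,\dots,m_n}$ reassembling, via Vandermonde-type convolution, into $\frac{1}{(n+1)\ell}\binom{(n+1)\ell}{n}$.

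The main obstacle is the bookkeeping in this second, combinatorial derivation: making precise what "taking $\mathbf{e}^-$ of a permutation module and matching through CRT" means at the level of parking functions, and verifying that the resulting count is genuinely the orbit count in the larger rectangle rather than something off by the number of regular orbits. The cleanest safeguard is to keep the representation-theoretic skeleton of the first paragraph as the actual proof and present the parking-function picture as commentary; the only real checks are then (i) that the coprimality hypotheses on $\ell$ are exactly what CRT and Corollary \ref{cor:signtwist} require, and (ii) that $\nabla e_n|_{q=t=1}$ indeed has the claimed monomial expansion, which is immediate from the Shuffle Theorem quoted in the proof of Proposition \ref{subreg} (or classically from Haiman's theorem, independent of \cite{CM}).
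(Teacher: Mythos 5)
Your argument is correct, but it is not the route the paper takes for this particular statement: what you give is essentially the paper's proof of the general Theorem \ref{thm:dimension} specialized to $W=S_n$, $h=n$, whereas the paper's own proof of Theorem \ref{thm:catalan} is deliberately packaged differently, in symmetric-function language. Concretely, the paper encodes the Kreweras numbers as $d_{\lambda,\ell}=\langle P_{\ell,n}\cdot 1,m_\lambda\rangle|_{q=t=1}$ using the elliptic Hall algebra operators of \cite{Negut}, collapses the sum over $\lambda$ into a single pairing $\langle \omega P_{\ell,n}\cdot 1,\nabla e_n\rangle|_{q=t=1}$, writes the target as $\langle P_{(n+1)\ell-n,n}\cdot 1,e_n\rangle|_{q=t=1}$, and then interprets both sides via rational $(m,n)$-parking function modules $\C[Q/mQ]$, finishing with Lemma \ref{lem:signtwist} and the Chinese remainder theorem. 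Your version replaces the elliptic Hall algebra bookkeeping by Frobenius reciprocity for the permutation module $\C[Q/\ell Q]=\bigoplus_\lambda d_{\lambda,\ell}\Ind_{W_\lambda}^{W}(1)$, the identification $\DR_n\cong \sgn\otimes\C[Q/(n+1)Q]$, CRT, and Corollary \ref{cor:signtwist} applied with $m=\ell(n+1)$; the two arguments have the same mathematical core (sign twist of $\C[Q/mQ]$ plus CRT), and your identification $\langle h_\lambda,\nabla e_n\rangle|_{q=t=1}=\dim\DR_n^{W_\lambda}$ is exactly the paper's earlier Proposition, so nothing new needs to be proved. What the paper's formulation buys is the built-in $(q,t)$-refinement (the pairings $\langle P_{m,n}\cdot 1,h_\lambda\rangle$ are known bigraded by the rational Shuffle Theorem, which is why the paper raises the question of a graded version); what yours buys is economy, since it avoids introducing the operators $P_{m,n}$ at all. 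Your coprimality bookkeeping (CRT needs $\ell$ prime to $n+1$, Corollary \ref{cor:signtwist} needs $\ell(n+1)$ prime to $n$) is at the same level of precision as the paper's own use of the hypothesis "$n\not\equiv 0,-1 \bmod \ell$", so I do not count it as a gap; and your decision to keep the bijective parking-function picture as commentary rather than proof is the right call, since that sketch (matching vertical-run data through $\mathbf{e}^-$ and CRT) is the only part of your write-up that is not actually established.
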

\begin{proof}
By  \cite[Proposition 1.7]{BEG} and \cite[§5.2]{GN}, we can write the rational Kreweras numbers as 
\begin{equation}
\label{eq:krewerasnumbers}
d_{\lambda,\ell}=\langle P_{\ell,n}\cdot 1, m_\lambda\rangle|_{q=t=1}
\end{equation}

where $m_\lambda$ are the monomial symmetric functions and $P_{m,n}$ for $m,n\geq 0$ are the EHA operators from above.
By Eq. \eqref{dhl}, we have
\begin{equation}
\label{eq:dimensionsum}
\dim H^*(\Gr^{\gamma,\zeta})=\sum_{\lambda \vdash n}d_{\lambda,\ell}\langle h_\lambda, \nabla e_n\rangle|_{q=t=1}.
\end{equation}
By combining equations \eqref{eq:krewerasnumbers} and \eqref{eq:dimensionsum}, we can use linearity of the scalar product to get 
\begin{equation}
\label{eq:pairing}
\langle \sum_{\lambda} h_\lambda\langle P_{\ell,n}\cdot 1, m_\lambda\rangle, \nabla e_n\rangle|_{q=t=1}=
\langle \omega P_{\ell,n}\cdot 1, \nabla e_n\rangle|_{q=t=1} \;.\end{equation}
Write now $\mathbf{p}_{\ell/n}=(P_{\ell,n}\cdot 1)|_{q=t=1}$. Again by \cite[Proposition 1.7]{BEG} and \cite[§5.2]{GN}, this is simply the ungraded Frobenius characteristic of $\C[\Lambda/\ell \Lambda]$. In particular, we have the specialization $\mathbf{p}_{\ell/n}(1)={\rm Cat}_n(n,\ell)$.

By standard properties of the Hall inner product, the RHS of Eq. \eqref{eq:pairing} equals $(\mathbf{p}_{\ell/n}\star (\nabla e_n)|_{q=t=1})(1)$ where $\star$ denotes the Kronecker product on symmetric functions.
Just as in the proof of Theorem \ref{thm:dimension}, when $n\not\equiv 0, -1$ mod $\ell$, the Chinese remainder theorem implies that $$\C[\Lambda/\ell \Lambda\times \Lambda/(n+1)\Lambda]\cong \C[\Lambda/(\ell(n+1)) \Lambda]$$ as $S_n$-representations. In particular, by the above we have
$$\mathbf{p}_{\ell/n}\star (\nabla e_n)|_{q=t=1}=\mathbf{p}_{(\ell(n+1))/n}.$$ 
Since $\mathbf{p}_{(\ell(n+1))/n}(1)={\rm Cat}_n((n+1)\ell-n,n)$, we are done.
\end{proof}

\subsection{The bigrading and the subregular block in type A}
In this subsection, we discuss the bigrading on the side of the quantum group in the case of a subregular block and its relation to the shuffle theorem. We then return to affine Springer fibers and upgrade Theorem \ref{thm:dimension} to include the natural bigrading for $G=SL_n$, assuming the main conjecture from \cite{CM}. 
\subsubsection{The subregular block}
In the notations of Section \ref{sec:dimformula}, when $\lambda\in \Lambda$ has stabilizer $W_\lambda\cong S_2$, we call $\lambda$ a subregular weight. In type A, this implies that the associated partition is $\lambda=(n-1,1)$.
In this case, Proposition \ref{prop:bigradedinvariants} has the following explicit corollary.
\begin{corollary} \label{subreg} 
When $W = S_n$ and $\lambda=(n-1,1)$, we have $$\dim_{q,t}(\DR^\lambda_n)=\langle h_\lambda, \nabla e_n\rangle=\sum_{k=0}^{n-1} \sum_{i+j=k} q^it^j.$$ 
% in particular the bigraded dimensions of $\DR_n$ can be tabulated as
% $$\young(11\cdots11,11\cdots1,\myvdots\myvdots\myvdots,11,1)$$
\end{corollary}
\begin{proof}
The Shuffle Theorem of \cite[(1.1)]{CMShuffle} states that $$\nabla e_n=\sum_{\pi\in PF_n} q^{\text{area}(\pi)}t^{\text{dinv}(\pi)}x_\pi \;,$$ where $\pi\in PF_n$ is a parking function on $n$ letters, and area and dinv are certain combinatorial statistics (see \cite{CMShuffle} for the definition). The monomial $x_\pi$ is a monomial in the alphabet $\{x_1,\ldots,x_n,\ldots\}$ associated to $\pi$.
Collecting all the monomials in the $S_n$-orbit of a fixed $\pi$ and using the orthogonality of the bases $\{m_\lambda\},\{h_\mu\}$ for the Hall inner product, we see that $\langle h_\lambda, \nabla e_n\rangle$ is a weighted count of Dyck paths whose associated monomial is $\lambda$. For $\lambda=(n-1,1)$, these are Dyck paths differing from the one with minimal area by allowing an extra horizontal step (compare \cite{EHHK}, where a similar result is proved using Schr\"oder paths). Fixing the length of this step, we get $n-\text{length}$ Dyck paths, each of which has the same area. It is easy to see that they all have a different dinv statistic. In total, we get $\binom{n+1}{2}$ Dyck paths, each with distinct statistics. This completes the proof.
\end{proof}
This corollary has an immediate application to the small quantum group, which does not involve the geometry of affine Springer fibers. This resolves a conjecture of the third author and Qi \cite[Conjecture 4.9(3)]{LQ1} at the level of bigraded vector spaces.
\begin{corollary} 
Let $\fg^\vee = {\mathfrak{sl}}_n$ and let $Z(\fu_\zeta^{\vee,\lambda})$ denote the block of the center of the small quantum group $\fu_\zeta(\fg^\vee)$ with $\lambda$ a subregular weight. Let $P_\lambda \subset G= SL_n$ be the parabolic subgroup associated to $\lambda$ and $\widetilde{N_\lambda} \cong T^*(G/P_\lambda)$ the Springer resolution. The additional grading of the coherent sheaf of poly-vectorfields $\wedge^j T \widetilde{N_\lambda}$ is given by the induced action of $\C^*$ along the fibers of the Springer resolution.  Then there are isomorphisms of bigraded vector spaces  
\[ Z(\fu_\zeta^{\vee,\lambda})^{i,j} \cong H^i (\widetilde{N_\lambda}, \wedge^j T \widetilde{N_\lambda})^{-i-j}  \cong\left(\DR_n^\lambda \right)^{\binom{n+1}{2} - \frac{i+j}{2}, \frac{j-i}{2}} . \]
\end{corollary} 
\begin{proof} 
The first isomorphism is a particular case of Theorem \ref{block:coherent} which is a consequence of Theorem 7 in \cite{BL}. The bigraded dimensions of the equivariant coherent sheaf cohomologies in the case where $\lambda$ is subregular and $G/P_\lambda \cong{\mathbb P}^k$ are computed in Theorem 3.3 of \cite{LQ2}. They match exactly the bigraded dimensions of $\DR_n^\lambda$ obtained in Corollary \ref{subreg}. 
\end{proof} 
This shows in particular that for $G$ of type $A$ and a singular block $Z(\fu_\zeta^{\vee, \lambda})$ such that $G/P_\lambda$ is a projective space, the cohomology of the corresponding affine Springer fiber is isomorphic to the whole singular block of the center.  
Note that in this case, the whole block of the center of the small quantum group is $G^\vee$-invariant.

\subsubsection{The positive part of the affine Springer fiber and diagonal coinvariants}
We now define a bigrading on $H^*(\Gr^{\gamma,\zeta})^{\tW \cdot}$ by applying results from \cite{CM}. We focus on $H^*(\Fl_\bI^\gamma)^{\tW \cdot}$, in other words, the principal block. On the principal block, the obtained bigrading conjecturally coincides with that of $\DR_n$.
Below, we will use symmetric functions in two sets of variables $X, Y$. We also use standard plethystic notation,  such as writing $f[XY]$ for the result of substituting $p_k(X)$ by $p_k(X)p_k(Y)$ in the expansion of $f\in \Sym_{q,t}$ in the basis of the $p_\lambda$. See the book \cite{Haglund} for details.

For the rest of this section, suppose $\gamma=ts$ for $s\in \ft^{reg}$ as before. For technical reasons, we also take $G=GL_n$. It is not simply connected but all the definitions concerning affine Springer theory apply in this case as well. The relation to the $SL_n$ case and $\fu_\zeta^{\vee}$ is explained below. Let $\Gr_G^+$ be the positive part of the affine Grassmannian, which on $\C$-points consists of lattices $\Lambda\subset \cK^n$ contained in the standard lattice $\cO^n$. The positive part of the affine flag variety $\Fl_\bI^+$ is defined as the preimage of $\Gr_G^+$ under the natural projection $\Fl\to\Gr$. 

The positive part of the affine Springer fiber $\Fl^\gamma_\bI$ is similarly defined to be $$\Fl^{\gamma,+}_\bI:=\Fl_\bI^+\cap \Fl^{\gamma}_\bI \;.$$ Its equivariant Borel--Moore homology
$H_*^T(\Fl^{\gamma,+}_\bI)$ is naturally bigraded by the connected component $t^i\in \pi_0(\Fl_\bI^+)=\Z$ and the (half of the) cohomological grading $q^j\in \Z$. Moreover, this space carries two bigraded $S_n$-actions, one from the Springer action and one from the monodromy as $s$ moves in a family. We will still call these the Springer or star action and the equivariant centralizer-monodromy or dot action, respectively. 

These positive parts were studied in \cite{CM,CM1,Kiv}, and they are closely related to the isospectral Hilbert scheme of $\C^2$ as well as Haiman's work \cite{HaimanVanishing}.
The main result we will need is the following theorem, which is the $k=1$ specialization of \cite[Theorem A]{CM}. 

\begin{theorem}
The Frobenius character of $H_*^T(\Fl^{\gamma,+}_\bI)$ for the $S_n \times S_n$-action is given by 
$$\Frob_{q,t,X,Y}(H_*^T(\Fl^{\gamma,+}_\bI))=
q^{-\binom{n}{2}}\omega_X\nabla e_n\left[\frac{XY}{(1-q)(1-t)}\right].$$
\end{theorem}
\begin{remark}
Compare this also to \cite[Conjecture 3.7]{CM1}, proved in \cite[Remark 7.3]{BAL}.
\end{remark}
\begin{corollary}
$$\Frob_{q,t,X,Y}(H_*(\Fl^{\gamma,+}_\bI))=q^{-\binom{n}{2}}\omega_X\nabla e_n\left[\frac{XY}{1-t}\right].$$
\end{corollary}
\begin{proof}
Since $\Fl^{\gamma,+}_\bI$ is equivariantly formal, the generators of $H_*^T(pt)$ form a regular sequence in $H_*^T(\Fl^{\gamma,+}_\bI)$.
Now apply \cite[Lemma 3.6]{HaimanVanishing}.
\end{proof}

Next, note that the positive part of the lattice $\Lambda$, i.e. $\Lambda^+ := \Z_{\geq 0}^n$, acts on $\Fl^{\gamma,+}_\bI$. As explained in \cite{KiTr}, we have 
$$\Fl^{\gamma,+}_\bI/\Lambda^+\cong \Fl^{\gamma}_\bI/\Lambda.$$ Note that the RHS also equals the similar lattice quotient for $G=SL_n$. Further, from for example the explicit description as the module called ''$M$" in \cite{CM}, we see $$H_*^T(\Fl^{\gamma}_\bI)\cong H_*^T(\Fl^{\gamma,+}_\bI)\otimes_{\C[\Lambda^+]}\C[\Lambda]$$ as $\C[\Lambda]$-modules.

Using the degeneration of the Cartan--Leray spectral sequence for the $\Lambda^+$ and $\Lambda$-actions on $\Fl^{\gamma,+}_\bI$, resp. $\Fl^{\gamma}_\bI$, we have 
\begin{lemma}
$$H_*(\Fl^{\gamma}_\bI/\Lambda)=\bigoplus_{i} \Tor_i^{\C[\Lambda^+]}(H_*(\Fl^{\gamma,+}_\bI),\C) \;.$$
\end{lemma}
Suppose we wanted to kill the lattice action instead of passing to the non-equivariant limit. Indeed, since $(H_*(\Fl^{\gamma,+}_\bI)_{\Lambda^+})^*\cong H^*(\Fl^{\gamma,+}_\bI)^\Lambda$, the bigraded Frobenius characters under $S_n$ are the same. Note that the coinvariant space is by definition $H_*(\Fl^{\gamma,+}_\bI)_{\Lambda^+}=\Tor_0^{\C[\Lambda^+]}(H_*(\Fl^{\gamma,+}_\bI),\C)$, so inherits a second grading from $H_*(\Fl^{\gamma,+}_\bI)$.

Again by \cite[Lemma  3.6]{HaimanVanishing}
$$\Frob_{q,t,X,Y}(H_*^T(U))=\omega_X\nabla e_n\left[\frac{XY}{1-q}\right],$$
where the LHS is the bigraded Frobenius characteristic equivariant Borel--Moore homology of a certain open fundamental domain $U$ of the lattice action defined in \cite[Definition 6.9]{CM}. It has only even dimensional nontrivial cohomology groups, as is implied from the formula. But interestingly, this does not mean that it is equivariantly formal, and indeed this space will have nontrivial odd usual Borel--Moore homology groups.

Finally, Eq. (4) of \cite{CM} implies that
$$\sum_{i\geq 0} (-1)^i \Frob_{q,t,X,Y}(\Tor_i^{\C[\Lambda^+]}(H_*(U),\C))=\omega_X \nabla e_n\left[XY\right].$$ 
The main conjecture of {\em loc. cit.} states that the $\Tor_i$ groups that appear on the left contain only those nontrivial representations $\chi_\lambda$ of the left $S_n$-action (the dot action) for $i=\iota(\lambda')$, $\iota$ being a certain combinatorial statistic from the nabla positivity conjecture. In particular, taking lattice invariants is the result of substituting $p_k(Y)=1$ in $e_n[XY]$, in other words taking the trivial component of the representation of the ''dot" action. This is the same as the $\Tor_0$ part, and so by \cite[Conjecture A]{CM} corresponds to tensoring out both $\mathbf{x}$ and $\mathbf{y}$ from $H_*^T(\Fl^{\gamma,+}_\bI)$ over $\C[\Lambda^+]\otimes \C[\ft]\cong \C[\mathbf{x},\mathbf{y}]$, without including higher derived functors.

Combining the above remarks, we have
\begin{theorem} \label{thm:bgrd}
Suppose \cite[Conjecture A]{CM} is true. 
Then $$\Frob_{q,t}(H^*(\Fl^{\gamma}_\bI)^\Lambda)=\omega_X \nabla e_n.$$ In other words, the bigraded structure of the $\Lambda$-invariants, as an $S_n$-representation, coincides with the sign-twist of the diagonal coinvariants.
\end{theorem}
Similarly, we may obtain the bigraded dimensions of $H^*(\ffl^{t\gamma_\ell,\ell}_{\bP_\lambda})^\Lambda$ in this way, where we use the notation $\gamma_\ell=t^\ell s$ to distinguish from the temporary assumption we made on $\gamma$ in the beginning of this subsection.
\begin{corollary}
\label{cor:bigradedsingularblocks}
For any $\lambda\in \Lambda/\widetilde{W}$, we have 
$$\dim_{q,t}(H^*(\ffl_{\bP_\lambda}^{t\gamma_\ell,\ell})^\Lambda)=\langle \Frob_{q,t}(H^*(\Fl_{\bI}^\gamma)^\Lambda),e_\lambda\rangle=\langle \omega \nabla e_n,\omega h_\lambda\rangle=\langle \nabla e_n, e_\lambda\rangle.$$
\end{corollary}

\section{The small quantum group}
\label{sec:smallquantum}
In this section, we discuss further implications of Theorem \ref{BBASV-thm} and the results of the previous two sections to the structure of $Z(u_\zeta^\vee)^{G^\vee}$.
We first cite a description for the center of the small quantum group based on 
the derived equivalence of categories between a certain category of representations of quantum groups at roots of unity and a derived category of $G \times {\mathbb C}^*$ equivariant coherent sheaves over the Springer resolution (see \cite{ABG}, \cite{BL}). 
For $\lambda \in \Lambda$, let $\fu_\zeta^{\vee,\lambda}$ denote the block corresponding to the $\widetilde{W}$-orbit of $\lambda$ as before. 
The following result is shown in \cite{LQ2}: 
\begin{theorem} \label{block:coherent}
Let $P^\vee_\lambda \subset G^\vee$ be the parabolic subgroup associated to $\lambda$ and $\widetilde{N_\lambda} \cong T^*(G^\vee/P^\vee_\lambda)$ the corresponding partial Springer resolution. There is an isomorphism of algebras  
\[ Z(\fu_\zeta^{\vee,\lambda}) \cong\bigoplus_{i+j+k=0}  H^i (\widetilde{N_\lambda}, \wedge^j T \widetilde{N_\lambda})^{k}  .  \]
Here $\wedge^\bullet T \widetilde{N_\lambda}$ is the coherent sheaf of poly-vectorfields, which has an induced action of $\C^*$ coming from dilating the fibers of the Springer resolution, and the degree $k$ tracks the grading induced by this action. 
 \end{theorem} 
We will use this result  to discuss some interesting previously known components of the center of the small quantum group and identify them in the framework of the affine Grassmannian model for the center. For later use, we denote $\tau \in H^0(\widetilde N_{\lambda}, \wedge^2 T\widetilde N_{\lambda})^{-2}$ to be the canonical Poisson bivector field.

\subsection{The Harish-Chandra center, the Higman ideal and the Verlinde quotient} 
The center $Z(\fu^\vee_\zeta)$ contains several interesting subalgebras. In this subsection, we define some of these and give some indications of their geometric meaning.
We define two commutative subalgebras of the dual small quantum group $(\fu_\zeta^\vee)^*$ by 
\begin{align*} c &= \{ f \in (\fu_\zeta^\vee)^* \; | \; f(ab) = f(ba) \;\; \forall a, b \in \fu_\zeta^\vee\} , \\ 
 c_l &= \{ f \in (\fu_\zeta^\vee)^* \; | \; f(ab) = f(b S^2(a)) \;\; \forall a, b \in \fu_\zeta^\vee\} , 
 \end{align*}
where $S$ is the antipode. 

For any $\fu_\zeta^\vee$-module $M$, its character defined as $\chi_M(x) = {\rm tr}_M(x)$ for $x \in \fu_\zeta^\vee$ is an element in $c$. The complexification of the Grothendieck ring  ${\rm K}_0(\fu_\zeta^\vee$-mod) is a subalgebra in $c$: 
\[ r(\fu_\zeta^\vee) = {\rm K}_0(\fu_\zeta^\vee {\text{-}} {\rm{mod}})\otimes_\Z \C \subset c .\]

The characters of the projective $\fu_\zeta^\vee$-modules form an ideal in $r(\fu_\zeta^\vee)$ with respect to the multiplication induced from the tensor product of the representations. 
We define the ideal 
\[  p(\fu_\zeta^\vee) = \{ \chi_P \; | \; P \; {\rm is} \; {\rm projective} \} \subset r(\fu_\zeta^\vee). \]

Following \cite{AP} we can define a larger ideal spanned by the characters of {\it negligible} tilting modules, spanned by the characters of those direct summands of tensor products of simple $\fu_\zeta^\vee$-modules with highest weight in the closure of the first dominant alcove $\overline{\mathbf{a}}$ that have zero quantum dimension: 
\[ q(\fu_\zeta^\vee) = \{ \chi_N \; | \; {\rm tr}_N(K_{2 \rho}) =0 \} \subset r(\fu_\zeta^\vee) ,\] 
where $\rho$ is half the sum of the positive roots of $\fg^\vee$. It is known that 
\[ p(\fu_\zeta^\vee) \subset q(\fu_\zeta^\vee) \subset r(\fu_\zeta^\vee) ,\] 
where the first inclusion follows from \cite[Propositions 3.5 and 5.8]{And92}. The quotient 
\[ v(\fu_\zeta^\vee) = r(\fu_\zeta^\vee)/q(\fu_\zeta^\vee) \]
is a semisimple commutative algebra, see  \cite{And92}, Section 4. 

Recall that $\fu_\zeta^\vee$ is a quasitriangular Hopf algebra with the invertible element $R = \sum R_1 \otimes R_2 \in \fu_\zeta^\vee \otimes \fu_\zeta^\vee$ such that $u = \sum S(R_2) R_1$ is invertible and has the property $S^2(x) = u x u^{-1}$ for any $x \in \fu_\zeta^\vee$. We can define the isomorphism of commutative algebras 
\[ \mu_l : c(\fu_\zeta^\vee) \to c_l (\fu_\zeta^\vee), \quad  \mu_l(f) = f(u \;\cdot -) .\]
Then we set  
\[ r_l(\fu_\zeta^\vee) = \mu_l(r(\fu_\zeta^\vee)), \quad  q_l(\fu_\zeta^\vee) = \mu_l(q(\fu_\zeta^\vee)), \quad  p_l(\fu_\zeta^\vee) = \mu_l(p(\fu_\zeta^\vee)). \] 

Recall from \cite{Dri89} that the map 
\[ J : c_l(\fu_\zeta^\vee) \to Z(\fu_\zeta^\vee) , \quad  J:  f \to m(f \circ S^{-1} \otimes {\rm id}) (R_{21}R_{12}) \] 
 defines an isomorphism from the space of the left-shifted tracelike functionals $c_l$ to the center of $\fu_\zeta^\vee$. Here $m$ denotes the multiplication $\fu_\zeta^\vee \otimes \fu_\zeta^\vee \to \fu_\zeta^\vee$. Restricted to $r_l$, it is an injective algebra homomorphism. 

\begin{definition} 
The Harish-Chandra center $Z_{\rm HC}$ is the subalgebra in the center $Z(\fu_\zeta^\vee)$  defined by 
\[ Z_{\rm HC} = J (r_l(\fu_\zeta^\vee)) .\]
\end{definition} 
Since the Harish-Chandra center descends from the center of the divided powers quantum group, it lies in the $G^\vee$-invariant part of the center of $\fu_\zeta^\vee$ (see \cite{LQ3}). 
\begin{definition} 
The Higman ideal in the center $Z(\fu_\zeta^\vee)$ is defined by 
\[ Z_{\rm Hig} = J(p_l(\fu_\zeta^\vee)). \]
\end{definition} 

\begin{remark} 
A more conventional definition of the Higman ideal in the center of a finite dimensional Hopf algebra is the following. Recall that $\fu_\zeta^\vee$ is a unimodular Hopf algebra, meaning that it contains a two-sided integral $\nu \in Z(\fu_\zeta^\vee)$ that is unique up to rescaling and such that $\nu x = \varepsilon(x) \nu$ and $x \nu = \varepsilon(x ) \nu$ for any $x \in \fu_\zeta^\vee$, where $\varepsilon : \fu_\zeta^\vee \to \C$ is the counit.
The Hopf algebra $\fu_\zeta^\vee$ is a left module over itself with respect to the Hopf adjoint action ${\rm ad}h (x) = \sum h_1 x S(h_2)$ for any $h, x \in \fu_\zeta^\vee$. Then the Higman ideal is defined as 
\[ Z_{\rm Hig} = {\rm ad}\nu (\fu_\zeta^\vee) .\] 
The equivalence of the two definitions in the case of the small quantum group is shown in \cite{LQ3}. 
\end{remark}

By analogy with the Verlinde algebra arising from the fusion category coming from the representation theory of quantum groups at roots of unity, we also define the semisimple Verlinde quotient of the center. 
\begin{definition} 
Let 
\[ Z_{\rm neg} = J (q_l(\fu_\zeta^\vee) \]
be the ideal spanned by the images by $J$ of the characters of $\fu_\zeta^\vee$-modules with zero quantum dimension. 
The Verlinde quotient of the center $Z(\fu_\zeta^\vee)$ is defined by 
\[ {\rm Ver} = Z_{\rm HC} / Z_{\rm neg} .\]
\end{definition} 
We have 
\[ Z_{\rm Hig} \subset Z_{\rm neg} \subset Z_{\rm HC} \subset Z(\fu_\zeta^\vee)^{G^\vee} .\]

\begin{proposition} 
Suppose that $\ell$ is very good. We have 
\[ {\rm dim} \, Z_{\rm HC} = \ell^{{\rm rk}(\fg^\vee)},  \quad 
{\rm dim} \, Z_{\rm Hig} = {\rm Cat}_W(\ell, h), \quad {\rm dim} \, {\rm Ver} = {\rm Cat}_W(\ell-h, h) .\] 
\end{proposition} 
\begin{proof} 
The dimension of the Harish-Chandra center equals to the number of inequivalent simple $\fu_\zeta^\vee$-modules, which are classified by the elements of the $\ell$-restricted weight lattice by Lusztig's tensor product theorem \cite{Lus89}. The dimension of the Higman ideal equals to the total number of blocks of $\fu_\zeta^\vee$ as computed in \cite{LQ3}. The dimension of ${\rm Ver}$ equals to the number of regular blocks of $\fu_\zeta^\vee$, which follows from the characterization of the negligeable tilting modules given in \cite{AP}. The  number of total and regular blocks equals to the number of total and regular orbits of $W$ in the $\ell$-restricted weight lattice and is given  in equations  (\ref{eq:allorbits}) and (\ref{eq:regularorbits}) respectively, expressed in terms of the rational Catalan numbers. 
\end{proof} 
Next we will describe the block decomposition of these subspaces in the center. 

\begin{proposition} 
\begin{enumerate}[label=(\alph*)]
\item Let $C^{W_\lambda}_W = \C[\ft]^{W_\lambda}/\C[\ft]_+^W$ denote the (partial) coinvariant algebra associated to the stabilizer subgroup $W_\lambda \subset W$. Then there is an isomorphism of algebras
\[ Z_{\rm HC}^\lambda : = Z_{\rm HC} \cap Z(\fu_\zeta^{\vee, \lambda}) \cong C^{W_\lambda}_W .\]
In particular, $Z_{\rm HC}^\lambda$  has the dimension ${\rm dim} \, Z_{\rm HC}^\lambda = [W : W_\lambda]$.  
\item We have
\[ Z_{\rm Hig}^\lambda = Z_{\rm Hig} \cap Z(\fu_\zeta^{\vee, \lambda}) \cong{\rm AnnRad}(C^{W_\lambda}_W).\]
In particular, ${\rm dim}\, Z_{\rm Hig}^\lambda = 1$  for all $\lambda$. 
\item We have 
\[  {\rm Ver}^\lambda = {\rm Ver} \cap Z(\fu_\zeta^{\vee, \lambda}) \cong \left[ \begin{array}{ll} 
C^\lambda / {\rm Rad} \, C^{W_\lambda}_W, & \lambda \;\; {\rm regular} \\
0, & {\rm otherwise.}  \end{array} \right. \]
The dimension of ${\rm Ver}^\lambda$ is one if $\lambda$ is regular and zero otherwise. 
\end{enumerate} 
\end{proposition} 
\begin{proof} 
\begin{enumerate}[label=(\alph*)]
\item The algebraic structure of the Harish-Chandra center blocks was computed in \cite{BrownGordon} to be isomorphic to the coinvariant algebra $C^{W_\lambda}_W$, which carries a natural action of $W$ and has dimension $ [W : W_\lambda]$. 
\item Since $Z_{\rm Hig}$ annihilates the radical of $Z(\fu_\zeta^\vee)$, and $C^{W_\lambda}_W$ is a local Frobenius algebra for each $\lambda$, we have that $Z_{\rm Hig}^\lambda$ spans the one-dimensional annihilator of the radical of $C^{W_\lambda}_W$. 
In particular, ${\rm dim} \, Z_{\rm Hig}^\lambda = 1$.   
\item  Following the description of the characters of the negligible modules in \cite{AP}, we conclude that they span the radical (of codimension 1) of each regular block in the algebra $r_l(\fu_\zeta^\vee)$. All characters of modules in singular blocks come from the negligible modules. Using the algebra isomorphism $J : r_l(\fu_\zeta^\vee) \cong Z_{\rm HC}$ that maps $q_l(\fu_\zeta^\vee) \to Z_{\rm neg}$ allows us to conclude. 
\end{enumerate} 
\end{proof} 
We want to understand the place of $Z_{\rm HC}, Z_{\rm Hig}$ and ${\rm Ver}$ inside the new  model for the center of the small quantum group that comes from the isomorphism of algebras 
\[Z(u^\vee_\zeta)^{G^\vee}_{geom} \cong H^*(\Gr^{\gamma,\zeta})^{\widetilde{W} \cdot } \cong\bigoplus_{[W_\lambda]} d_{\lambda, \ell} H^*(\ffl_{\bP_\lambda}^{t\gamma,\ell})^{\widetilde W \cdot} . \] 
It was remarked in \cite{BL} that the Harish-Chandra center $Z_{\rm HC}^\lambda \subset Z(\fu_\zeta^{\vee, \lambda})$ can be identified in the framework of Theorem \ref{block:coherent} with the canonically defined subalgebra corresponding to the cohomology of the (partial) flag variety. 
Let $\widetilde{N_\lambda} \cong T^*(G^\vee/P^\vee_\lambda)$ and  $d = {\rm dim}_\C (G^\vee/P^\vee_\lambda)$. Then 
\[ Z_{\rm HC}^\lambda  \cong \oplus_{0 \leq i \leq d} H^i(\widetilde{N}_\lambda, \wedge^i T\widetilde{N}_\lambda)^{-2i} \cong H^{\bullet} (G^\vee/P^\vee_\lambda) \cong C^{W_\lambda}_W .  \] 
Since $Z_{\rm Hig}^\lambda$ and ${\rm Ver}^\lambda$ are respectively the socle and the head of the ring $C_W^{W_\lambda}$, we have (see also \cite{LQ3}): 
\[ Z_{\rm Hig}^\lambda \cong H^d(\widetilde{N}_\lambda, \wedge^d T\widetilde{N}_\lambda)^{-2d} \cong\C . \]
For a regular weight $\lambda$
\[ {\rm Ver}^\lambda \cong H^0(\widetilde{N}_\lambda, \wedge^0 T\widetilde{N}_\lambda)^{0} \cong\C . \]

Now consider the affine Springer fiber model. Each component 
$\ffl_{\bP_\lambda}^{t\gamma,\ell}$ contains a (partial) flag subvariety $X_\lambda = G/ P_\lambda$, where $P_\lambda$ is a parabolic subalgebra with the parabolic roots fixed by the stabilizer subgroup in $W$ of the weight $\lambda \in \Lambda$. Then the algebra $H^*(\ffl_{\bP_\lambda}^{t\gamma,\ell})^{\widetilde W \cdot}$ contains a well defined subalgebra  $H^* (X_\lambda)$ isomorphic to $C_W^{W_\lambda}$, which can be identified with $Z_{\rm HC}^\lambda$. The Higman ideal is then spanned by the socles of all blocks $C_W^{W_\lambda}$, and the Verlinde quotient is spanned by the heads of the regular blocks. 

Recall the isomorphism (\ref{block:DR}) 
of $W$-representations for the Springer action of $W$ on the left hand side  
\[ H^*(\ffl_{\bP_\lambda}^{t\gamma,\ell})^{\tW\cdot}\cong (\DR_W \otimes \sgn)\mathbf{e}_\lambda^- \;,  \]
that comes from Proposition \ref{prop:invariants}. Then $Z_{\rm HC}^\lambda \subset (\DR_W \otimes \sgn)\mathbf{e}_\lambda^- $  is a $W$-submodule isomorphic to $(C_W \otimes {\rm sgn})\mathbf{e}_\lambda^-$. Here $Z_{\rm Hig}$ is the subspace of the sign isotypical component in each block $(C_W \otimes {\rm sgn})\mathbf{e}_\lambda^- \subset (\DR_W \otimes \sgn)\mathbf{e}_\lambda^-$, and ${\rm Ver}$ is the trivial isotypical component of $(C_W \otimes {\rm sgn})\mathbf{e}_\lambda^-$ for regular $\lambda$. 

\[ Z_{\rm Hig} \cong\bigoplus_{[W_\lambda]} d_{\lambda, \ell} \; {\rm sgn}  \subset  Z_{\rm HC}  \cong \bigoplus_{[W_\lambda]} d_{\lambda, \ell} \;  (C_W \otimes \sgn)\mathbf{e}_\lambda^-  .\] 
Similarly we have 
\[ {\rm Ver} \cong\bigoplus_{[W_\lambda],\; \lambda \; {\rm regular}} d_{\lambda, \ell} \; {\rm triv}  \subset  Z_{\rm HC}  \cong \bigoplus_{[W_\lambda]} d_{\lambda, \ell} \;  (C_W \otimes \sgn)\mathbf{e}_\lambda^- .\] 

In particular, ${\rm dim} Z_{\rm Hig} = {\rm Cat}_W(\ell, h)$ is the number of orbits of the action of the extended affine Weyl group in the weight lattice, and  ${\rm dim} {\rm Ver} = {\rm Cat}_W(\ell-h, h)$ is the number of the regular orbits. Note that $Z_{\rm Hig}$ is exactly the isotypical component of the sign representation in $H^*(\ffl_{\bP_\lambda}^\gamma)^{\tW\cdot}$ with respect to the Springer action.

\section{Spectral curves and the parabolic Hitchin fibration}
\label{sec:hitchin}
As is now well-known, the cohomology of affine Springer fibers can be related to the cohomology of Hitchin fibers \cite{Ngo}. The cohomology of Hitchin fibers carries a natural perverse filtration, which can be used to obtain a bigrading on the space  $H^*(\Fl_\bI^\gamma)^{\tW\cdot} \cong H^*(\Fl_\bI^\gamma)^\Lambda$ for $G=SL_n$, which corresponds to the principal block of the center under Conjecture \ref{BBASV-conj}. In this section, we make this connection precise and conjecture that the obtained bigrading is that of the diagonal coinvariants up to a linear regrading. We note that by \cite[Conjecture 8.10]{KiTr}, Conjecture \ref{conj:perverseconjecture} is equivalent to Theorem \ref{thm:bgrd}, so that the bigradings obtained in the two ways should be equivalent.

There are several desirable features in obtaining the bigrading through the perverse filtration. For example, there is a natural Lefschetz element acting on the cohomology, coming from the relatively ample determinant bundle on the parabolic Hitchin fibration.  We conjecture that the $\fs\fl_2$-action on the center obtained this way coincides with the one given by the wedge product with the Poisson bivector field $\tau$ on the Springer resolution. 

\subsection{The parabolic Hitchin fibration}
First, we want to construct a particular compactification of the singular curve given by $x^n + y^n = 0 \subset \C^2$, inside a Hirzebruch surface. Most importantly, this compactification will be irreducible and have only an isolated singular point which is an ordinary $n$-uple point.

Let $\Sigma_r = \mathbb P(\mathcal O_{\mathbb P^1}(r) \oplus \mathcal O_{\mathbb P^1})$  be the $r$-th Hirzebruch surface. The Picard group of $\Sigma_r$ is generated by the zero section $E_r$ and the class of a fiber $F$, with intersection form determined by $F^2 = 0, E_r^2=-r$ and $E_rF=1$. 

Recall that there is a birational map from $\Sigma_r$ to $\Sigma_{r+1}$, called an ''elementary transform" (see \cite[Chapter 3]{Beauville}), constructed as follows. We choose some fiber $F$, 
and consider the surface $\Sigma_r’$, the blow-up of $\Sigma_r$ at $p := F \cap E_r$. Let $F’, E_r’$ be the 
strict transforms of $F, E_r$ and $\tilde E$ be the exceptional divisor of this blow-up. Then we have $$0 = F^2 = (F’+ \tilde E)^2 = (F’)^2 + 2 -1, $$ hence $F’$ is a $(-1)$-curve and can be contracted, the 
resulting surface being $\Sigma_{r+1}$. See Figure \ref{fig:1} for the toric picture, where the red line 
is the contracted curve.

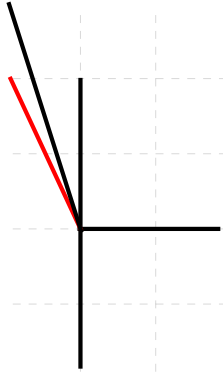
\begin{figure}[h]
\begin{center}
\begin{tikzpicture}
\draw[help lines, color=gray!30, dashed] (-0.9,-1.9) grid (1.9,2.9);
\draw[-,shorten <= -0.04cm,ultra thick] (0,0)--(2,0);
\draw[-,ultra thick] (0,-2)--(0,2.15);
\draw[-,shorten <= -0.03cm,ultra thick,color=red] (0,0)--(-1,2.15);
\draw[-,shorten <= -0.05cm,ultra thick] (0,0)--(-1,3.15);
\end{tikzpicture}
\caption{The toric blow-up and contraction giving a birational map $\Sigma_r\to \Sigma_{r+1}$.}
\end{center}
\label{fig:1}
\end{figure}
Now we can prove :

\begin{lemma}
\label{lemma:spectralcurve}
For all $n \geq 0$, there is an irreducible curve $C \subset \Sigma_2$ with a unique singularity analytically isomorphic to $x^n+y^n = 0$. \end{lemma}

\begin{proof}
Let $C_2 \subset \mathbb P^2$ be a smooth curve of degree $n$, and $\Sigma_1$ be the blow-up of
$\mathbb P^2$ at a point $a \notin C_2$. 
We denote by $C_1$ the preimage of $C_2$ in $\Sigma_1$. Consider a generic fiber $F_0$ and the corresponding elementary transform.

The strict transform $C’$  of $C_1$ inside $\Sigma_1’$ is isomorphic to $C_1$. Denote by $C$ the image of $C'$ under the contraction of $F'$. Since $F \cap C_1$ is given by $n$ points, we see that $C$ is analytically isomorphic to $C_2$ where $n$ points have been glued transversally together, resulting in an ordinary $n$-uple point $q$. It's clear that $C \backslash \{q\}$ is smooth. Since $C \backslash \{ q\}$ is connected, $C$ is irreducible. 
\end{proof}
\begin{remark}
Since $C_2$ is the normalization of $C$, the geometric genus of $C$ is $g_g=\binom{n-1}{2}$. Since the blowdown introduces $\binom{n}{2}$ nodes to $C'$, the arithmetic genus is 
$g_a=g_g+\binom{n}{2}=(n-1)^2$.
\end{remark}

\begin{definition}
\label{def:hitchinstack}
Let $X/\C$ be a smooth projective curve, $G$ a reductive group, and $\cL$ a line bundle on $X$ with $\deg \cL\geq g_X$.
The {\em Hitchin moduli stack} is the functor 
$$\cM: \text{Sch}_\C \to \text{Grpd}$$ sending $$S\mapsto \{(E,\varphi)| E \text{ is a } G\text{-torsor over } S\times X, \varphi\in H^0(\Ad(E)\otimes \cL)\}.$$
\end{definition}
\begin{definition}
\label{def:parabolichitchinstack}
Let $X,G,\cL$ be as above. The {\em parabolic Hitchin moduli stack}  is the functor 
$$\widetilde{\cM}:\text{Sch}_\C \to \text{Grpd}$$ sending 
$$S\mapsto \{(E,\varphi,x,E_x)|(E,\varphi)\in\cM, x\in X, E_x \text{ is a } B\text{-reduction along } \Gamma(x) \text{ of } E\}.$$
\end{definition}
Let $D$ be a divisor so that $\cO(D)=\cL$. The Hitchin moduli stack can be interpreted as classifying sections 
$$a: X\to \cO(D)\times^{\G_m} [\fg/G]$$
\begin{definition}
\label{def:hitchinfibration}
The morphism $$\cM\to \cA:=\bigoplus_{i=1}^n H^0(X, \cO(d_i D))$$
sending a section $a$ to its image in $\cO(D)\times^{\G_m} \Sym(\ft^*)^W$ is called the {\em Hitchin fibration}. The base $\cA$ is called the {\em Hitchin base}. The composition $$\widetilde{\cM}\to \cM\times X\to \cA\times X$$ is called the {\em parabolic Hitchin fibration}.
\end{definition}

Let now $G=SL_n$ and $\cL$ be a line bundle of degree $\geq 0$ on $\P^1$.
By the BNR correspondence \cite{BNR}, we may realize the curve $C$ from Lemma \ref{lemma:spectralcurve}, or rather its intersection with $\text{Tot}(\cO(2))$ as a spectral curve $\{\det(xI-\varphi)=0\}$ for the Hitchin fibration $$\cM\to \cA$$ associated to the data of $\P^1,G,\cL$. Let $a\in \cA$ be such that $C$ is the associated spectral curve. Note that we in fact have $a\in\cA^{ani}\subset \cA^{\heartsuit}$, the locus where the spectral curves are irreducible, resp. reduced (we will not need a more general definition of $\cA^{ani}$ or $\cA^\heartsuit$ here, for that see \cite[§ 6.1]{Ngo}).

The relationship to the affine Springer fibers considered in this paper is as follows. The curve $C$ may be chosen so that the unique singularity is over $0\in X$. Its local form corresponds to $\gamma=ts\in \fg(\cK)$ as before, for $s=\diag(1,\rho,\ldots,\rho^{n-1})$ where $\rho$ is a primitive $n$:th root of unity. Let
$(a,0)\in \cA^\heartsuit\times X$. Then \cite[Proposition 2.4.1]{Yun} says that 

\begin{equation}
\label{eq:localtoglobal}
\cP_a\times^{P^{red}_0(J_a)} \Fl^{\gamma}_\bI\to \widetilde{\cM}_a
\end{equation} is a homeomorphism of stacks.

Here $\cP_a$ is the generalized Picard stack, $P_0^{red}(J_a)$ the reduced quotient of the local Picard stack at $0$.
Modding out by $\cP_a$, the left-hand side of Eq. \eqref{eq:localtoglobal} simplifies to $\Fl^{\gamma}_\bI/P_0^{red}(J_a)$. By taking $\gamma=s t$ for $s\in \ft^{reg}$ as above, it is easy to compute by hand in this case that $P_0(J_a)=T(\C)\times \Lambda$ where $T$ is the diagonal torus in $\GL_n$ and $\Lambda=X^*(T)\cong \Z^n$ is the lattice part of the centralizer.

Modifying the proof of \cite[Proposition 4.13.1]{Ngo} slightly, we can write the following variant of Eq. \eqref{eq:localtoglobal}:
\begin{equation}
\label{eq:asfhitchin}
\widetilde{\cM}_a/\cP_a^\flat \cong \Fl^\gamma_\bI/\Lambda \;,
\end{equation} 
where $\cP_a^\flat$ is the Picard group of the normalization of $C$ as in \cite[4.7.3]{Ngo}. 

The upshot of this analysis is that we may define the {\em perverse filtration} on $H^*(\Fl^{\gamma}_\bI/\Lambda)$. Namely, 
if $\pi: \widetilde{\cM} \to \cM\times \{0\}\to \cA^{ani}$ denotes the restriction of the parabolic Hitchin fibration to the locus of irreducible spectral curves and with the parabolic reduction at $0\in X$, $\pi_*\C$ acquires a filtration from the $t$-structure on the base as $$P_{\leq i}:=\im({}^p\tau_{\leq i} \pi_*\C\to {}^p\tau_{\leq i+1} \pi_*\C).$$ Restricting to the stalk at $a$, we get a filtration $P_{\leq i}$ on $H^*(\widetilde{\cM}_a/\cP_a^\flat)\cong H^*(\Fl^{\gamma}_\bI/\Lambda)$. By results of Maulik--Yun \cite{MY} this filtration is independent of the choice of deformation of $C$ used here (we only require the total space to be smooth and a codimension estimate on the base, handled in this case by \cite{Ngo}). See also \cite[Section 3.1.3]{MY}.

Based on \cite[Conjecture 8.10]{KiTr} and results of the previous section, we make the following conjecture.
\begin{conjecture}
\label{conj:perverseconjecture}
For 
\begin{equation}
\label{eq:perverseconjecture}
\DR_n^{i,j}\cong \gr^P_{j+i} H^{2i}(\Fl^{\gamma}_\bI)^{\Lambda} \;.
\end{equation}
\end{conjecture}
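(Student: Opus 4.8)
The plan is to leverage the three ingredients already assembled in the excerpt: the homeomorphism $\widetilde{\cM}_a/\cP_a^\flat\cong \Fl^\gamma_\bI/\Lambda$ from Eq.~\eqref{eq:asfhitchin}, the identification $H^*(\Fl_\bI^\gamma)^{\widetilde W}\cong H^*(\Fl_\bI^\gamma)^\Lambda$, and the combinatorial structure of $\DR_n$ as the $q,t$-Frobenius character $\nabla e_n$. The ungraded identification $H^*(\Fl_\bI^\gamma)^\Lambda\cong \DR_n$ (as $S_n$-representations) is already essentially Proposition~\ref{prop:invariants} specialized to the principal block, combined with the sign-twist / $\C[Q/(h+1)Q]$ description of $\DR_W$; so the content of Conjecture~\ref{conj:perverseconjecture} is purely that the \emph{bigrading} is matched correctly, where one grading is the cohomological degree and the other is the perverse filtration coming from $\pi:\widetilde{\cM}\to\cA^{ani}$. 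I would therefore reduce to showing that $\gr^P H^*(\Fl^\gamma_\bI)^\Lambda$ has the same $q,t$-bigraded Frobenius character as $\DR_n$, namely $\nabla e_n$ (up to the explicit re-indexing by $\binom{n+1}{2}$ and the change of variables $(i,j)\mapsto (\tfrac{j-i}{2},\binom{n+1}{2}-\tfrac{i+j}{2})$ that already appears in the subregular corollary).

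First I would make precise the dictionary between the affine Springer side and the compactified Jacobian side: by the Maulik--Yun independence-of-deformation result \cite{MY} the perverse filtration on $H^*(\widetilde{\cM}_a/\cP_a^\flat)$ agrees with the one obtained from any smooth deformation of the spectral curve $C$, and in particular from the versal deformation of the planar singularity $x^n+y^n=0$. This is exactly the geometric setup in which the Gorsky--Oblomkov--Rasmussen--Shende / Oblomkov--Yun type results compute the bigraded cohomology of the compactified Jacobian of $x^n=y^n$ (equivalently, of the $(n,n)$-torus-link type curve) in terms of the rational Catalan/parking-function combinatorics. Next I would invoke the Carlsson--Mellit Shuffle Theorem \cite{CM} — which the paper already uses — to rewrite $\nabla e_n$ as the parking-function generating series $\sum_{\pi\in PF_n} q^{\text{area}(\pi)}t^{\text{dinv}(\pi)}x_\pi$, and match area with (a shift of) cohomological degree and dinv with the perverse degree on the affine Springer fiber / Hitchin fiber, using the torus-fixed-point decomposition of $\Fl^\gamma_\bI$ into affine cells indexed by parking functions and the known identification of the cell dimensions with the area statistic. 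Taking $\Lambda$-invariants on the geometric side corresponds precisely to collecting monomials in each $S_n$-orbit, i.e.\ to the $\langle h_\lambda,-\rangle$ pairing used throughout Section~\ref{sec:singularblocks}, so the invariant part picks out $\langle h_{(1^n)},\nabla e_n\rangle$-type data with both statistics retained.

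The key steps, in order: (1) unwind Eq.~\eqref{eq:asfhitchin} to transport the perverse filtration to $H^*(\Fl^\gamma_\bI)^\Lambda$ and record its interaction with the cohomological grading and the $S_n$-Springer action; (2) use \cite{MY} to replace the a priori filtration by the one coming from the versal deformation of $x^n+y^n=0$, so that known compactified-Jacobian computations apply; (3) identify $\gr^P H^*$ with the parking-function module, matching perverse degree $\leftrightarrow$ dinv (or $h$, the ``codimension'' statistic) and cohomological degree $\leftrightarrow$ area, compatibly with the $S_n$-action; (4) apply the Shuffle Theorem to conclude the bigraded Frobenius character is $\nabla e_n$, hence $\gr^P H^*(\Fl^\gamma_\bI)^\Lambda\cong \DR_n$ as bigraded vector spaces after the stated re-indexing; and (5) check the base case $G=SL_2$ by hand (the curve $x^2+y^2=0$, a node, whose compactified Jacobian is $\mathbb P^1$), which the paper already asserts.

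The main obstacle is step (3): matching the two statistics \emph{equivariantly} and \emph{with the correct normalization}. It is one thing to know the ungraded isomorphism and another to pin down that the perverse filtration — which is defined via the decomposition theorem for $\pi$, not cellularly — actually computes the dinv statistic rather than, say, the area statistic or some mixture; establishing this requires either an explicit basis of $H^*$ adapted to the filtration (e.g.\ via the Białynicki-Birula cells of $\Fl^\gamma_\bI$ together with a dimension count matching the perverse numbers), or an appeal to a ``local--global'' comparison identifying the parabolic Hitchin perverse filtration for this particular anisotropic $a$ with the link-homology/HOMFLY side where the area/dinv dictionary is established. A secondary subtlety is that the homeomorphism in Eq.~\eqref{eq:asfhitchin} is only a homeomorphism of stacks and one must be careful that it respects (or at worst shifts) the perverse filtration and the cohomological grading; controlling the $\cP_a^\flat$-quotient and the passage to $\Lambda$-invariants on the nose is where the argument is most delicate, and is presumably why the statement is left as a conjecture rather than a theorem, pending Conjecture~A of \cite{CM}.
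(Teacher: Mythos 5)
There is a genuine gap here, and it is worth being precise about where. The statement you are addressing is stated in the paper as a \emph{conjecture}: the paper does not prove it in general, and its only proof content is the verification for $G=SL_2$ (realizing $\Fl^\gamma_\bI/\Lambda$ as an $I_2$ curve inside a Hitchin fibration, smoothing it along a generic line, and reading off the perverse degrees $-2,0,2$ from the decomposition theorem). Your outline is therefore not a reconstruction of an existing argument but a proposed proof of an open statement, and its load-bearing step fails. In steps (2)--(3) you appeal to ``known compactified-Jacobian computations'' to identify $\gr^P$ with the parking-function module (perverse degree $\leftrightarrow$ dinv, cohomological degree $\leftrightarrow$ area). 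But the theorems in this direction (Maulik--Yun, Oblomkov--Yun, Carlsson--Oblomkov, cited in the paper as giving $\gr^P H^*(\Fl^{\gamma_{(n+1)/n}})\cong \DR_n$) concern \emph{elliptic} homogeneous elements of coprime slope, where the spectral curve is unibranch at its singularity and there is no lattice action. Here $\gamma=st$ is split regular semisimple, the singularity is the ordinary $n$-uple point $x^n+y^n=0$, and the symmetry lattice $\Lambda$ is everything; in this non-coprime $(n,n)$ situation the matching of the perverse filtration with area/dinv is exactly the open content of the conjecture (it is of the same nature as the ORS-type predictions and Carlsson--Mellit's Conjecture~A). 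The Shuffle Theorem only tells you the bigraded character of $\DR_n$; it says nothing about which geometric filtration on $H^*(\Fl^\gamma_\bI)^\Lambda$ realizes it. Nor can you shortcut via Theorem~\ref{thm:bgrd}: that bigrading comes from the connected-component grading on the positive part (conditional on Conjecture~A of \cite{CM}), and the paper treats its agreement with the perverse filtration as yet another conjecture, so invoking it would be circular.

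Two secondary points. First, Eq.~\eqref{eq:asfhitchin} and \cite{MY} give you a perverse filtration on $H^*(\Fl^\gamma_\bI/\Lambda)$, which is strictly larger than $H^*(\Fl^\gamma_\bI)^\Lambda$ (the invariants are only the $\Tor_0$/edge piece of the Cartan--Leray spectral sequence, cf.\ the lemma in Section~\ref{sec:CM}); transporting the filtration to the invariant subspace is itself a step that needs an argument, not just the homeomorphism of stacks. Second, your step (5) is the only part that is actually carried out anywhere, and it is precisely the paper's $SL_2$ proposition; a base case plus an acknowledged obstruction at step (3) does not amount to a proof, so as it stands your proposal restates the conjecture rather than establishing it.
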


\begin{proposition}
The conjecture \ref{conj:perverseconjecture} is true for $G = SL_2$.
\end{proposition}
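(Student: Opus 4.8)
The plan is to verify Conjecture \ref{conj:perverseconjecture} directly in the rank-one case by computing both sides explicitly. For $G=SL_2$ the diagonal coinvariant ring $\DR_2$ has dimension $(h+1)^r = 3^1 = 3$, and as a bigraded vector space it is well known (e.g. from $\nabla e_2 = s_{2} + (q+t) s_{11}$, so the sign-isotypic part picked out by Proposition \ref{subreg} with $n=2$) to have Hilbert series $1 + q + t$ in the appropriate normalization, i.e. one class in bidegree $(0,0)$ and one each in bidegrees $(1,0)$ and $(0,1)$. So the target is a three-dimensional space concentrated in perverse degrees $0$ and $1$ with the stated symmetry.

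First I would identify the geometry. For $SL_2$ with $s = \diag(1,-1)$ and $\gamma = st$, the affine Springer fiber $\Fl_{\bI}^\gamma$ in the affine flag variety is the well-understood "$\widehat{A}_1$" case: it is an infinite chain of $\mathbb{P}^1$'s (a type $\widetilde{A}_1$ configuration), on which $\Lambda \cong \mathbb{Z}$ acts by translation. Quotienting by $\Lambda$, the space $\Fl_{\bI}^\gamma/\Lambda$ is a nodal cycle of two $\mathbb{P}^1$'s meeting at two points (a "necklace" with two beads), whose cohomology is $3$-dimensional: $H^0$, one copy of $H^2$, and $H^2$ again from the loop — matching $\dim \DR_2 = 3$. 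One can also see this through Eq. \eqref{eq:asfhitchin}: the spectral curve $C$ from Lemma \ref{lemma:spectralcurve} for $n=2$ is a rational curve with a single node (arithmetic genus $(n-1)^2 = 1$, geometric genus $0$), and $\widetilde{\cM}_a/\cP_a^\flat$ is the compactified parabolic Jacobian of this nodal cubic, again a cycle of two $\mathbb{P}^1$'s.

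Next I would compute the perverse filtration. Here I would invoke the independence-of-deformation result of Maulik–Yun \cite{MY} cited in the text, so that I may replace the specific family by any convenient smooth deformation of $C$ over $\cA^{ani}$; for a nodal rational spectral curve this is classical. The perverse filtration on the cohomology of a compactified Jacobian of an integral curve of arithmetic genus one, with respect to the Hitchin/Jacobian fibration, puts $H^0$ in perverse degree $0$, and splits the two-dimensional "degree $2$" part into one class in perverse degree $0$ and one in perverse degree $1$ — this is exactly the perverse = weight / "$P=W$"-type statement in genus one, and can be checked by hand using the decomposition theorem for the family (the local system $R^1$ on the smooth locus has a one-dimensional invariant part and a one-dimensional vanishing-cycle part at the nodal point). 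The bidegrees then come out as $(0,0)$, $(1,0)$, $(0,1)$ after recording both the cohomological degree and the perverse degree and applying the change of variables matching Eq. \eqref{eq:perverseconjecture}, which reproduces the Hilbert series $1+q+t$ of $\DR_2$.

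Finally I would assemble these: the dimensions match ($3 = 3$), the supports of the two gradings match (perverse degrees $\{0,1\}$, cohomological degrees $\{0,2\}$), and the single nontrivial bit of data — how the two-dimensional $H^2$ splits across the perverse filtration — is determined by the vanishing-cycle computation above, giving one generator in each of $P_0$ and $P_1$. This produces the claimed isomorphism of bigraded vector spaces. The main obstacle, modest in this rank-one case, is bookkeeping: pinning down the precise normalization/change of variables relating the pair (cohomological degree, perverse degree) on the Hitchin side to the pair $(q,t)$-degree on $\DR_n$, and checking it is the one under which Conjecture \ref{conj:perverseconjecture} is stated; once that dictionary is fixed the rest is a direct verification. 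One could alternatively bypass the Hitchin picture entirely and compute the perverse filtration from the $SL_2$ affine Springer fiber directly via its paving by affine spaces, but the compactified-Jacobian route makes the $t$-symmetry $q\leftrightarrow t$ manifest.
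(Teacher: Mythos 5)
Your argument follows the same route as the paper's proof: identify $\Fl^{\gamma}_\bI$ with the infinite chain of $\mathbb P^1$'s on which $\Lambda\cong\Z$ acts by translation (by $2$), so that $X_0=\Fl^{\gamma}_\bI/\Lambda$ is the cycle of two $\mathbb P^1$'s glued at two points, realize $X_0$ as a genus-one (parabolic) Hitchin fiber, pass to a one-parameter smoothing $f:X\to L$, and read the perverse filtration off the decomposition theorem; the paper does exactly this, writing $f_*\underline{\C}_X=\underline{\C}_L\oplus\underline{\C}_L[-2]\oplus\underline{\C}_0[-2]\oplus\mathscr L[-1]$ with $\mathscr L$ the rank-two unipotent local system. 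Two bookkeeping points in your write-up need correction, though neither derails the argument. First, $H^*(X_0)$ is $4$-dimensional, not $3$: the loop of the ``necklace'' contributes a class in $H^1$, not in $H^2$, while the two classes in $H^2$ are the two component classes; the $3$-dimensional space in Conjecture \ref{conj:perverseconjecture} is $H^*(\Fl^{\gamma}_\bI)^\Lambda$, i.e.\ the part $H^0\oplus H^2$, with the $\mathscr L$-contribution $H^1$ discarded (the paper's ``pure part''). Relatedly, in your perverse-filtration paragraph you refer to the compactified Jacobian of the \emph{integral} genus-one spectral curve, whose $H^2$ is only $1$-dimensional; the $2$-dimensional $H^2$ you then split belongs to the parabolic fiber $X_0$. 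Second, in the convention where the filtration is induced by ${}^p\tau_{\leq i}\pi_*\C$ (the one used in the paper), the three surviving classes occupy three \emph{distinct} consecutive perverse degrees: $H^0$ at the bottom (from $\underline{\C}_L$), the component-difference class in $H^2$ in the middle (the skyscraper $\underline{\C}_0[-2]$), and the fiber class at the top (from $\underline{\C}_L[-2]$) --- the paper's ``$-2,0,2$''. Your assignment, with $H^0$ and one $H^2$-class sharing perverse degree $0$ and the other in degree $1$, is correct only in the centered normalization (perverse degree minus half the cohomological degree); no $H^2$-class can lie in the bottom piece, whose stalk is just that of $\underline{\C}_L$. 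With either normalization the renormalized bigraded Hilbert series is $1+q+t=\dim_{q,t}\DR_2$, so your verification goes through at the same ``up to renormalization'' level of precision as the paper's.
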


\begin{proof}
In this case, the two vector spaces are equal to $\C^3$, hence we just need to check that the gradings agree. The affine Springer fiber $\Fl^{\gamma}_\bI$ can be identified with an infinite chains of $\mathbb P^1$, and the lattice action is obtained by translation by $2$ (\cite{Yun2}). Hence the quotient $X_0 = \Fl^{\gamma}_\bI/\Lambda$ is isomorphic to an elliptic curve with a singularity of type $I_2$ (i.e two $\mathbb P^1$ glued transversally twice). By the discussion before, this curve also appears as a spectral curve inside a cotangent bundle of $\mathbb P^1$, hence its compactified Jacobian is a Hitchin fiber inside the corresponding Hitchin fibration. Since $X_0$ has arithmetic genus $1$, it is isomorphic to its own compactified Jacobian. It follows by versality of the Hitchin map in this case that the restriction of this fibration to a generic line is simply a smoothing of $X_0$, say $f : X \to L = \C$. Let $L^* = L \backslash \{0\}$. By the decomposition theorem, we have $$f_* \underline{\C}_{X} = \underline{\C}_L \oplus  \underline{\C}_L[-2] \oplus  \underline{\C}_0[-2] \oplus \mathscr L[-1],$$ where $\mathscr L$ is the rank $2$ local system on $L^*$ given by the matrix $\begin{pmatrix} 1 & 2 \\ 0 & 1 \end{pmatrix}$. The pure part is given by  $\underline{\C}_L \oplus  \underline{\C}_L[-2] \oplus  \underline{\C}_0[-2]$. The perverse degrees are $-2,0,2$. Up to renormalisation, †his agrees with the diagonal coinvariants. 
\end{proof}

\subsection{The Lefschetz element}
Let $\cL_{det}$ be the determinant line bundle on $\cM$. The iterated cup product by $c_1(\cL_{det})$ induces a map 
$$\cup c_1(\cL_{det})^{g_a-i}: {}^pH^{\dim\cA+i}\pi_*\C\to{}^pH^{\dim\cA+2g_a-i}\pi_*\C \;,$$ and therefore maps 
$$\cup c_1(\cL_{det}): \gr^PH^*(\Fl^{\gamma}_\bI/\Lambda)\to \gr^PH^*(\Fl^{\gamma}_\bI/\Lambda)$$ of bidegrees $(a,b)=(2,2)$, where $a$ is the cohomological degree and $b$ is the perverse degree. 
  
We will now prove that $c_1(\cL_{det})$ coincides with a certain polynomial in the ring of diagonal coinvariants, under Conjecture \ref{conj:perverseconjecture}.
On the other hand, under the bigraded isomorphism of the principal block of the center $Z(\fu_\zeta^0)$ with sheaf cohomology groups of the Springer resolution, we can hope that  $c_1(\cL_{det})$ coincides with the Poisson bivector field on the Springer resolution as explained in more detail in Conjecture \ref{conj:springer}. 

\begin{theorem}
\label{thm:haimandet}
Under the identification Eq. \eqref{eq:perverseconjecture}, the element $c_1(\cL_{det})\in \gr^P H^*(\Fl^{\gamma}_\bI)^{\Lambda}$ corresponds up to a nonzero scalar to the ''Haiman determinant" 
$\Delta_{(n-1,1)}\in DR_n$ given by 
$$\Delta_{(n-1,1)}=\det(y_i^{p_j}x_i^{q_j})_{1\leq i,j\leq n} \;,$$
where $(p_1,q_1),\ldots, (p_n,q_n)$ is any ordering of $(0,0),(0,1),\ldots,(0,n-1),(1,0)\in\Z_{\geq 0}^2$.
\end{theorem}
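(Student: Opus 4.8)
The plan is to identify $c_1(\mathcal L_{det})$ by characterizing it intrinsically inside $\gr^P H^*(\Fl^{\gamma}_\bI)^\Lambda$ and matching that characterization on the diagonal-coinvariant side. First I would pin down the bidegree: by the discussion preceding the theorem, cup product with $c_1(\mathcal L_{det})$ has cohomological--perverse bidegree $(2,2)$, and under the normalization of Conjecture \ref{conj:perverseconjecture} this translates into a specific bidegree in $\DR_n$. Using the $q,t$-Frobenius $\nabla e_n$ and the fact that $\DR_n\cong \sgn\otimes\C[Q/(h+1)Q]$, one checks that the relevant bigraded isotypic piece — the one containing the $S_n$-equivariant class supporting the Lefschetz operator, i.e. the generator of the ``lowest'' nontrivial perverse piece carrying the ample class — is one-dimensional and spanned precisely by the Haiman polynomial $\Delta_{(n-1,1)}=\det(y_i^{p_j}x_i^{q_j})$, where $(p_j,q_j)$ runs over $(0,0),(0,1),\dots,(0,n-1),(1,0)$. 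This is exactly the ``staircase plus one box'' shape whose determinant Haiman uses as the canonical generator of $\DR_n$ in that bidegree, so the statement reduces to: (i) the target space is $1$-dimensional, and (ii) $c_1(\mathcal L_{det})$ is nonzero there.

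For (i), I would combine Proposition (the $\langle h_\lambda,\nabla e_n\rangle$ formula) with the explicit expansion of $\nabla e_n$ via the Shuffle Theorem of Carlsson--Mellit \cite{CM} (as already invoked in the proof of Proposition \ref{subreg}): the coefficient of the relevant monomial symmetric function in the bidegree $\big(\binom{n+1}{2}-1,\, \text{appropriate }t\text{-degree}\big)$, matching the $(2,2)$ Lefschetz shift, is a single parking function, giving a one-dimensional space. Equivalently one can argue on the coherent/Springer-resolution side via Theorem \ref{Springer_res} and \cite{BL}, identifying this class with the Poisson bivector (wedge of $T\widetilde N$), which lives in $H^0(\widetilde N,\wedge^2 T\widetilde N)$ and is manifestly one-dimensional; this is the content of the companion Conjecture \ref{conj:springer}, but the purely combinatorial count suffices for the present statement.

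For (ii), nonvanishing of $c_1(\mathcal L_{det})$, I would use that $\mathcal L_{det}$ is relatively ample for the parabolic Hitchin fibration $\pi:\widetilde{\cM}\to \cA^{ani}$ (it is the determinant-of-cohomology line bundle, positive on fibers), so by the relative Hard Lefschetz theorem the iterated cup products $\cup c_1(\mathcal L_{det})^{g_a-i}:{}^pH^{\dim\cA+i}\pi_*\C\to {}^pH^{\dim\cA+2g_a-i}\pi_*\C$ are isomorphisms; in particular the first power $\cup c_1(\mathcal L_{det})$ is injective on the bottom perverse piece, hence nonzero in the associated graded of the stalk at $a$. Transporting through the homeomorphism $\widetilde{\cM}_a/\cP_a^\flat\cong \Fl^\gamma_\bI/\Lambda$ of Eq. \eqref{eq:asfhitchin} and through Conjecture \ref{conj:perverseconjecture}, the class lands in the one-dimensional space from (i) as a nonzero vector, so it equals $\Delta_{(n-1,1)}$ up to scalar.

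The main obstacle I expect is step (i)/bidegree bookkeeping: one must be careful that the normalization in Conjecture \ref{conj:perverseconjecture} (the ``up to renormalization'' appearing already in the $SL_2$ case) sends the $(2,2)$ Lefschetz bidegree to the correct bigraded slot of $\DR_n$, and then verify via the Shuffle Theorem that exactly that slot is one-dimensional with the claimed determinant as a basis. The Hard Lefschetz input for (ii) is essentially formal given relative ampleness of $\mathcal L_{det}$ and \cite{MY}, \cite{Ngo}; the delicate point is purely the combinatorial/representation-theoretic identification of the target line with $\C\cdot\Delta_{(n-1,1)}$.
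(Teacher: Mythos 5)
Your proposal is essentially the paper's own argument: the proof given there consists of exactly your two reductions --- the target bigraded line is one-dimensional, contains $\Delta_{(n-1,1)}$, and $c_1(\cL_{det})$ is nonzero --- so your appeal to relative ampleness/relative Hard Lefschetz for the nonvanishing and to the Shuffle Theorem for the dimension count only spells out what the paper leaves implicit. The one point to watch (which applies to the paper's one-line proof as well) is that the full bigraded component of $\DR_n$ in the bidegree of $\Delta_{(n-1,1)}$ is not one-dimensional for $n\geq 3$ (for $n=3$ the relevant piece has dimension $3$), so the one-dimensionality must be asserted for the alternating (sign-isotypic) part, where the space of alternants of bidegree $\left(\binom{n-1}{2},1\right)$ is indeed spanned by $\Delta_{(n-1,1)}$.
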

\begin{proof}
Since the relevant subspace in grading $(n-1,n-1)$ is $1$-dimensional, contains $\Delta_{(n-1,1)}$ and $c_1(\cL_{det})$ is nonzero, we are done. 
\end{proof}

Finally, note that by the Jacobson--Morozov theorem, the nilpotent action of $e=\cup c_1(\cL_{det})$ extends to an $\fs\fl_2$-triple $(e,f,h)$ acting on $\gr^P H^*(\Fl^{\gamma}_\bI/\Lambda)$. By \cite[Conjecture 2.17]{MY} the Jacobson--Morozov filtration induced by $c_1(\cL_{det})$ on $H^*(\Fl^{\gamma}_\bI/\Lambda)$ is opposite to the perverse filtration. It is clear that the Jacobson--Morozov filtration induced by $\Delta_{(n-1,1)}$ on the diagonal coinvariants induces the filtration by antidiagonals.
Combining Theorem \ref{block:coherent} and Conjecture \ref{BBASV-conj}, we also have the following conjecture.
\begin{conjecture}
\label{conj:springer}
There is a bigraded algebra isomorphism
$$
\oplus_{i+j+k = 0} H^i(\widetilde{N}, \wedge^j T\widetilde{N})^k\cong H^*(\Fl^\gamma_\bI)^\Lambda \;,
$$
where the bigrading on the right is the one defined in in Theorem \ref{thm:bgrd}. Alternatively,
$$
\oplus_{i+j+k = 0} H^i(\widetilde{N}, \wedge^j T\widetilde{N})^k\cong \gr^P H^*(\Fl^\gamma_\bI)^\Lambda \;.
$$
Moreover, the element $\tau$ corresponding to the Poisson bivector field on the left should correspond up to a scalar to the polynomial $\Delta_{(n-1,1)}$ introduced in Theorem \ref{thm:haimandet}, or in the second version equivalently to $c_1(\cL_{det})$.
\end{conjecture}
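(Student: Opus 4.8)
The statement rests on Conjecture~\ref{BBASV-conj} and Theorem~\ref{Springer_res}, so the substantive new content is the compatibility of the Section~\ref{sec:CM} bigrading with the $(i,j)$-bigrading that Theorem~\ref{Springer_res} puts on $Z(\fu_\zeta^{\vee,0})$, together with the matching of the distinguished elements. The plan has three stages, and I begin with the bigraded vector-space statement. Granting \cite[Conjecture A]{CM}, Theorem~\ref{thm:bgrd} gives the bigraded $S_n$-character of $H^*(\Fl^\gamma_\bI)^\Lambda$ as $\omega_X\nabla e_n$; on the coherent side, the Frenkel--Lachowska--Qi description together with Theorem~\ref{Springer_res} identifies the bigraded dimensions of $\bigoplus_{i+j+k=0}H^i(\widetilde N,\wedge^jT\widetilde N)^k$ with those of $\DR_n$ (known for small $n$, and \cite[Conjecture 4.9]{LQ1} in general). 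Since $\omega$ preserves dimensions, the two sides carry the same bigraded dimensions after the linear change of bidegrees analogous to the one in the Corollary following Proposition~\ref{subreg} --- there written for $G/P_\lambda\simeq\mathbb P^k$, here needed for $G/B$ --- and tracking the $S_n$-actions upgrades this to an isomorphism of bigraded $S_n$-modules.

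\emph{Algebra structure.} First take the ungraded algebra isomorphism of \cite{BBASV} (Conjecture~\ref{BBASV-conj}) and restrict it to the principal block, which under the block decomposition of the affine Springer fibers (Eq.~\eqref{eq:spfixedpoints}) reads as an isomorphism $H^*(\Fl^\gamma_\bI)^\Lambda\cong Z(\fu_\zeta^{\vee,0})^{G^\vee}$ of algebras; combined with Theorem~\ref{Springer_res} for $P=B$ this identifies it with $\bigoplus_{i+j+k=0}H^i(\widetilde N,\wedge^jT\widetilde N)^k$ as an algebra (one must also know the principal-block center is $G^\vee$-invariant, or else carry the invariant part through the whole argument). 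It then remains to check that this isomorphism is bigraded for the Section~\ref{sec:CM} grading on the left. I would transport the $\mathbb C^*$-action on $\widetilde N$ dilating the fibres --- which defines the weight $k$ --- across the derived equivalence of \cite{ABG} underlying \cite{BBASV}, obtain a filtration on $H^*(\Fl^\gamma_\bI)^\Lambda$, and identify it with the perverse filtration of the parabolic Hitchin fibration using the rigidity results of \cite{MY}; one then passes from the perverse to the Section~\ref{sec:CM} bigrading via Conjecture~\ref{conj:perverseconjecture} (equivalently, one must prove the coincidence of these two bigradings, which the paper also conjectures). A more hands-on alternative is to match presentations: $\bigoplus H^i(\widetilde N,\wedge^jT\widetilde N)^k$ is generated by its lowest pieces together with the Lefschetz class $\tau$, while $\gr^PH^*(\Fl^\gamma_\bI)^\Lambda$ carries the Lefschetz $\mathfrak{sl}_2$ of Section~\ref{sec:CM} generated by $c_1(\cL_{det})$, and matching both the generators and the $\mathfrak{sl}_2$-module structures --- using \cite[Conjecture 2.17]{MY} that the Jacobson--Morozov and perverse filtrations are opposite --- would pin down the ring isomorphism.

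\emph{The distinguished elements, and the main obstacle.} By the Proposition preceding the conjecture, $\tau$ spans the one-dimensional space $H^0(\widetilde N,\wedge^2T\widetilde N)^{-2}$, i.e.\ it lives in bidegree $(i,j)=(0,2)$; under the reindexing of the first stage this is the same bidegree as the Lefschetz element $c_1(\cL_{det})$ of bidegree $(2,2)$, hence of $\Delta_{(n-1,1)}$ by Theorem~\ref{thm:haimandet}. Since that graded piece is one-dimensional on both sides and all three elements are nonzero, the algebra isomorphism constructed above necessarily sends $\tau$ to a nonzero scalar multiple of $\Delta_{(n-1,1)}$ (resp.\ of $c_1(\cL_{det})$), exactly as in the proof of Theorem~\ref{thm:haimandet}. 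The main difficulty is the second stage: the vector-space statement is essentially combinatorics once the deep inputs are granted, but promoting it to an algebra isomorphism requires a genuinely geometric construction that simultaneously tracks the $\mathbb C^*$-weight on the quantum side and the perverse (or Section~\ref{sec:CM}) grading on the affine-Springer side --- and even the identification of the perverse and Section~\ref{sec:CM} bigradings on $H^*(\Fl^\gamma_\bI)^\Lambda$ is at present known only for $SL_2$.
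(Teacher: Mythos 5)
The statement you are trying to prove is Conjecture \ref{conj:springer}: the paper offers no proof of it (nor could it at present), so there is no argument of the authors to compare yours against. Judged on its own terms, your proposal is a reasonable roadmap but not a proof, and it is circular at exactly the decisive points. In the vector-space stage you invoke \cite[Conjecture 4.9]{LQ1} to identify the bigraded dimensions of $\bigoplus_{i+j+k=0}H^i(\widetilde N,\wedge^jT\widetilde N)^k$ with those of $\DR_n$; but the paper's own logic runs the other way --- it is Conjecture \ref{conj:springer} together with Theorem \ref{thm:bgrd} that would \emph{imply} \cite[Conjecture 4.9(3)]{LQ1} --- so you are assuming a statement essentially equivalent to the one to be proved. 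The same happens in your algebra-structure stage: ``transport the $\mathbb C^*$-weight across the equivalence of \cite{ABG}, obtain a filtration on $H^*(\Fl^\gamma_\bI)^\Lambda$, and identify it with the perverse filtration'' is not a step one can currently carry out; the existence of such a compatible transport, and its agreement with the perverse (or Section \ref{sec:CM}) grading, \emph{is} the content of the conjecture, known only for $SL_2$ (and even the equivalence of the two affine-Springer-side bigradings is itself Conjecture \ref{conj:perverseconjecture} plus an unproved comparison). You do flag this honestly as ``the main obstacle,'' but flagging it does not close it.

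Two smaller points. First, the matching of distinguished elements is fine as conditional reasoning --- once a bigraded isomorphism exists, one-dimensionality of the relevant piece forces $\tau\mapsto c\,\Delta_{(n-1,1)}$, exactly as in Theorem \ref{thm:haimandet} --- but your bookkeeping of bidegrees ($\tau$ in $(i,j)=(0,2)$ versus $c_1(\cL_{det})$ in $(2,2)$ ``under the reindexing'') is asserted rather than checked against the explicit change of variables of the Corollary after Proposition \ref{subreg}, which is stated there only for $G/P_\lambda\simeq\mathbb P^k$, not for $G/B$. Second, be careful that Theorem \ref{BBASV-thm} and Conjecture \ref{BBASV-conj} concern $H^*(\Gr^{\zeta,\gamma})^{\widetilde W}$ and its block pieces $H^*(\ffl^\gamma_{\bP_\lambda})^{\widetilde W}$, while Section \ref{sec:CM} works with $H^*(\Fl^\gamma_\bI)^\Lambda$ for $GL_n$; passing between these (principal block, $\widetilde W$- versus $\Lambda$-invariants, $SL_n$ versus $GL_n$) needs to be stated explicitly rather than absorbed into ``restrict to the principal block.'' In short: your text is a plausible statement of what a future proof would have to accomplish, and correctly identifies the hard step, but as written it establishes nothing beyond what the paper already conjectures.
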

In particular, combined with Theorem \ref{thm:bgrd} this Conjecture would imply \cite[Conjecture 4.9(3)]{LQ1}. 

\subsection{A degeneration of spectral curves}
In this section, we will geometrically construct a degeneration of spectral curves, including a map between the cohomologies of two affine Springer fibers in type $A$. The first of these cohomology groups is also conjecturally isomorphic to the double coinvariants by \cite{CO,KiTr}, and the other is the principal block of $H^*(\Gr^{\gamma,\zeta})^{\widetilde{W}\cdot}$, hence conjecturally equal to the $G^{\vee}$-invariant part of the center.  

We will study the elliptic homogeneous affine Springer fibers of slope $(n+1)/n$ associated to the elements 
$\gamma_{n+1/n}=u^{n+1}$, where 
$$u=\begin{pmatrix}
0  & \cdots & 0 & t\\
1  & \cdots & 0 & 0\\
0  & \ddots & 0 & 0\\
0 & 0 & 1 & 0
\end{pmatrix}.$$
In particular, we study these affine Springer fibers in relation to $\Fl^{\gamma}_\bI$, where $\gamma=ts$ as in the previous section. 
In particular, we will construct a a family of irreducible spectral curves $C_t \subset \mathrm{Tot}(\mathcal O_{\mathbb P^1}(2))$, such that the associated family of parabolic Hitchin fibers models the degeneration of affine Springer fibers of slope $n/n$ to the one of slope $(n+1)/n$. One can then ask whether the specialization map from the cohomology of the total family (which is just that of the central fiber) gives an injection to the cohomology of the special fiber, respecting the perverse filtration.

\begin{theorem}
\label{thm:spectral}
There exists a family of irreducible curves $\cC\to \A^1$, arising as a restriction of the Hitchin system to a line on the Hitchin base, so that the spectral curve $C_t, t\in \A^1$ will have two singular points: one singular point with equation $y^n=z^{n-1}$ independently of $t$, and another singular point of the form $y^n = tx^n + x^{n+1}$. 
\end{theorem}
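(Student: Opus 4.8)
The plan is to realize the desired family $\cC\to\A^1$ as the restriction of the $SL_n$-Hitchin fibration on $\P^1$ to a well-chosen affine line $\ell\subset\cA$ in the Hitchin base, and then analyze the spectral curves $C_t$ over this line directly from their defining equations in $\mathrm{Tot}(\cO_{\P^1}(2))$. Concretely, writing the spectral curve as $\{x^n + a_2(u)x^{n-2}+\cdots+a_n(u)=0\}$ with $a_i\in H^0(\P^1,\cO(2i))$, I would first pin down a single spectral curve $C_0$ (the $t=0$ member) whose only singularities are the two prescribed ones: one point with local equation $y^n=z^{n-1}$ (an elliptic homogeneous singularity of slope $(n+1)/n$, matching $\gamma_{(n+1)/n}$ from Eq.~\eqref{eq:homogeneouselements}), and one ordinary $n$-uple point $y^n=x^n$ (the slope $n/n$ singularity of $\gamma=st$, as in Lemma~\ref{lemma:spectralcurve}). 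Both singularity types are homogeneous, so a natural guess is to put the first over $u=\infty$ and the second over $u=0$ and choose the $a_i$ as explicit monomials in the two coordinate charts; irreducibility of $C_0$ then follows as in the proof of Lemma~\ref{lemma:spectralcurve}, since the complement of the singular locus in the normalization is connected.

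Second, I would perturb only the part of the equation controlling the $u=0$ singularity, replacing the homogeneous local model $y^n=x^n$ by $y^n = tx^n + x^{n+1}$ while leaving the $u=\infty$ behaviour untouched; this is achieved by adding to $a_n$ a term supported near $u=0$ with a coefficient linear in $t$, i.e.\ tracing out a line in $\cA$ through the point corresponding to $C_0$. One checks that for this line the $u=\infty$ singularity stays rigidly of type $y^n=z^{n-1}$ (the relevant coefficients are constant along the line), while near $u=0$ a local computation with the Newton polygon / Weierstrass preparation shows the singularity acquires exactly the stated form $y^n=tx^n+x^{n+1}$ and that no new singular points are created elsewhere on $C_t$ for generic $t$ — again giving irreducibility of each $C_t$ by the connectedness argument. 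Finally, I would record that this line lies in $\cA^{ani}$ (irreducible spectral curves), so that the parabolic Hitchin fibers over it are, by Eq.~\eqref{eq:localtoglobal} and Eq.~\eqref{eq:asfhitchin}, identified with the affine Springer fibers $\Fl^\gamma_\bI/\Lambda$ and their slope-$(n+1)/n$ degenerations, which is the point of the construction.

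The main obstacle I anticipate is the local analysis at $u=0$: one must produce an explicit monomial/linear perturbation of the single coefficient (or few coefficients) whose vanishing locus, after the elementary transform and blow-down used to pass into $\Sigma_2$ (cf.\ the discussion around Figure~\ref{fig:1} and Lemma~\ref{lemma:spectralcurve}), degenerates precisely to $y^n = tx^n+x^{n+1}$ and not to some other deformation in the same $\mu$-constant or $\delta$-constant stratum. In particular one needs the family to be \emph{versal enough} in the $u=0$ direction to realize the slope-$n/n$ $\to$ slope-$(n+1)/n$ degeneration (so that the total space is smooth and the Maulik--Yun codimension estimates from \cite{MY}, \cite{Ngo} apply), while being \emph{trivial} in the $u=\infty$ direction; balancing these requires a careful choice of the line in $\cA$, and is where the bulk of the (routine but fiddly) computation lies. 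A secondary point is checking that the arithmetic genus of $C_t$ is constant in $t$ and equal to that of $C_0$, which follows once one knows the two singularities have $\delta$-invariants $\binom{n}{2}$ (ordinary $n$-uple point) and $\binom{n-1}{2}$ (the $y^n=z^{n-1}$ cusp), adding up correctly against the degree of $\cO(2)$ on $\P^1$; this is the sanity check that the family genuinely lives over a line in $\cA^{ani}$.
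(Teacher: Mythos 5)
There is a genuine gap, and also an internal inconsistency in the setup. First, the inconsistency: the family in the statement, $y^n = tx^n + x^{n+1}$, has at $t=0$ the \emph{cusp} $y^n = x^{n+1}$ (the slope-$(n+1)/n$ singularity), and is equivalent to the ordinary $n$-uple point $y^n = x^n$ only for $t \neq 0$. You declare $C_0$ to be ``the $t=0$ member'' and give it the ordinary $n$-uple point of Lemma \ref{lemma:spectralcurve}, and then propose to ``perturb'' that local model by a $t$-linear term to reach $y^n = tx^n + x^{n+1}$. That cannot work as described: in the target family the coefficient of $x^n$ \emph{is} $t$, so the special fiber is the more degenerate curve and the $n$-uple point occurs generically; a perturbation of a fixed $C_0$ carrying $y^n=x^n$ by a term vanishing at $t=0$ produces a different (and wrong) family. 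The whole point of the construction is to degenerate the slope-$n/n$ singularity (generic $t$) to the slope-$(n+1)/n$ one (at $t=0$), and your plan has the special and generic fibers interchanged.

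Second, and more importantly, the actual content of the theorem --- exhibiting explicit equations realizing both local forms simultaneously, with no other singularities, irreducibly, along a line in $\cA$ --- is exactly what you defer as ``the main obstacle \dots where the bulk of the computation lies.'' The paper resolves it with a short trick absent from your proposal: start from the manifestly smooth family $\hat{C}_t : y^n = t + x$ in an affine chart $U \subset \Sigma_1$ (with $x$ the base coordinate), and apply the elementary transform $u = y/x$, $v = x$ into $\Sigma_2$; the strict transform acquires precisely the local equation $u^n = tv^n + v^{n+1}$ at the transformed point, while the closure, computed in $\P^2$ as $y^n = tz^n + xz^{n-1}$, has its only other singular point at $[1:0:0]$ with local equation $y^n = z^{n-1}(1+tz)$, i.e. $y^n = z^{n-1}$ up to a unit, visibly independent of $t$; irreducibility is read off on the chart $x \neq 0$, and only the top coefficient of the characteristic polynomial varies (linearly in $t$), so the family is the restriction of the Hitchin system to a line. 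Your Newton-polygon approach could likely be completed by writing down this same equation directly, but as it stands the proposal neither produces the equations nor verifies the two local forms; also the versality and Maulik--Yun codimension input you invoke is not needed for this statement (it enters only later, for the perverse filtration), while the genus bookkeeping you suggest is a consistency check rather than a substitute for the construction.
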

\begin{proof}
We construct the family of spectral curves realizing  this degeneration as follows: let $E \subset \Sigma_1$ be the exceptional section inside the first Hirzebruch surface, and $F \subset \Sigma_1$ some fiber, which we will call "the fiber at infinity". Let $U = \Sigma_1 \backslash ( E \cup F)$. Take coordinates $x,y$ on $U$ such that the straight lines $x = \text{constant}$ are the fibers of the projection $U \subset \Sigma_1 \to \mathbb P^1$. Let us consider the curve $\hat{C}_t \subset U$ given by the equation $y^n = t + x$. The effect of a positive elementary transform $\varphi : \Sigma_r \dashrightarrow \Sigma_{r+1}$ is given by the change of variables $u = y/x, v = x$. 

Hence the strict transforms of $\hat{C}_t$ (inside $\varphi(U)$) have local equation given by $u^n = tv^n + v^{n+1}$, giving the desired degeneration. Now let us describe the singular point at infinity (i.e compute the closure of these curves inside $\Sigma_2$), and prove that $C_t$ is irreducible for all $t \in \mathbb A^1$. 

First, we claim that the closure of $\hat{C}_t$ doesn’t intersect $E$. Indeed, recall that $\Sigma_1$ is the blow-up of $\mathbb P^2$ at a point. Hence, it’s enough to take the closure of the preimage of $\hat{C}_t$ inside $\mathbb P^2$ (call this curve $\tilde C_t$) and check that $\tilde C_t$ doesn’t intersect the center of the blow-up. On $U$, we have coordinates $x,y$, that form a dense open of $\mathbb P^2$ (recall that $U$ and $E$ are disjoint by definition). Because $U \cong \mathbb A^2$, we can take homogeneous coordinates $[x:y:z]$ on $\mathbb P^2$. The fiber at infinity is given by $z=0$ and $U$ is given by $z=1$. The fiber $x=0$ and $z=0$ both contains the center of the blow-up which is therefore $[0:1:0]$. 

The closure of $\tilde C_t$ has equation $y^n = tz^n + xz^{n-1}$, which clearly doesn’t contain $[0:1:0]$. Since the elementary transforms are isomorphism outside the exceptional locus, it follows that the closure of $C_t$ coincide with the closure of $\tilde C_t$ inside $\mathbb P^2$, i.e the curve with equation $y^n = tz^n + xz^{n-1}$. The only point at infinity is $[1:0:0]$, and has local equation $y^n = tz^n + z^{n-1}$ as claimed. To check that $C_t$ is irreducible, it's enough to check that $C_t$ is irreducible on the chart $x \neq 0$. On this chart, $C_t$ is isomorphic to the curve given by $y^n = z^{n-1}$, which is irreducible. 
\end{proof}
Consider the associated family of parabolic Hitchin fibers, which is a restriction of the family in Definition \ref{def:hitchinfibration} to a line. Using Eq. \eqref{eq:localtoglobal}, we note that the only affine Springer fibers contributing to the cohomology are the ones coming from the singularities described above. We will ignore the one which is constant, for there is an injective map in cohomology sending the cohomology classes $\alpha\in H^*(\Fl_\bI^\eta/\Lambda_{\eta})$ of interest to $$ \alpha\otimes 1 \in H^*(\Fl_\bI^\eta/\Lambda_\eta)\otimes H^*(\Fl_\bI^{\gamma_{n-1/n}})\cong H^*(\widetilde{\cM}_a) \;,$$ where $\eta$ is either $\gamma$ or $\gamma_{n+1/n}$ depending on $t$.
\begin{remark}
If $t \neq 0$, note that $y^n=tx^n+x^{n+1}$ is locally isomorphic to the singularity $y^n + x^n = 0$. 
\end{remark}
In particular, by Theorem \ref{thm:spectral}, we get a pullback map $i^* : H^*(\Fl_\bI^{\gamma_{n+1/n}})\to H^*(\Fl^\gamma_\bI/\Lambda)$, coming from the inclusion $i : C_t \to \mathcal C$ of a fiber with $t\neq 0$ into the family $\mathcal C$. \begin{remark}
The perverse filtration on both the source and the target of $i^*$ is defined using the perverse filtration on the full Hitchin fibration. However, it is unclear how the perverse filtration defined in the full family compares with that induced by the $t$-structure on $\A^1$, as the pullback along the inclusion to the base is, in general, only right $t$-exact.
\end{remark}
We end the paper with the following conjecture.
\begin{conjecture}
The map $i^*$ is injective and its image is exactly $H^*(\Fl^\gamma_\bI)^\Lambda$. Moreover, since the map respects the perverse filtrations, we have an isomorphism of bigraded vector spaces 
$$\gr^PH^*(\Fl^\gamma_\bI)^\Lambda\cong \gr^PH^*(\Fl^{\gamma_{n+1/n}}_\bI).$$ Note that together with Conjecture \ref{conj:perverseconjecture} and the results of \cite{CO}, this is compatible with Theorem \ref{thm:bgrd}.
\end{conjecture}

\end{document}